\newcommand{\Real}{\mathbb{R}}
\theoremstyle{plain}
\newtheorem{theorem}{Theorem}[section]
\newtheorem{lemma}[theorem]{Lemma}
\theoremstyle{definition}
\theoremstyle{remark}
\begin{document}


\title{A New Spectral Conjugate Subgradient Method with Application in Computed Tomography Image Reconstruction}

\author{
\name{M. Loreto\textsuperscript{a}\thanks{CONTACT M. Loreto. Email: mloreto@uw.edu} and T. Humphries\textsuperscript{a} and C. Raghavan\textsuperscript{b} and K. Wu\textsuperscript{c} and S. Kwak\textsuperscript{d}  }
\affil{\textsuperscript{a} University of Washington Bothell, USA; \textsuperscript{b} New York University, USA; \textsuperscript{c} University of Washington, USA; \textsuperscript{d} University of Oregon, USA.}
}

\maketitle

\begin{abstract}
A new spectral conjugate subgradient method is presented to solve nonsmooth unconstrained optimization problems. The method combines the spectral conjugate gradient method for smooth problems with the spectral subgradient method for nonsmooth problems. We study the effect of two different choices of line search, as well as three formulas for determining the conjugate directions. In addition to numerical experiments with standard nonsmooth test problems, we also apply the method to several image reconstruction problems in computed tomography, using total variation regularization. Performance profiles are used to compare the performance of the algorithm using different line search strategies and conjugate directions to that of the original spectral subgradient method. Our results show that the spectral conjugate subgradient algorithm outperforms the original spectral subgradient method, and that the use of the Polak–Ribi\`ere formula for conjugate directions provides the best and most robust performance.
\end{abstract}

\begin{keywords}
Nonsmooth optimization; conjugate gradients; spectral subgradient; computed tomography.
\end{keywords}

\begin{amscode}90C30, 90C56, 94A08\end{amscode}

\section{Introduction}

{The problem under consideration is the minimization of nonsmooth functions without any constraints
\begin{equation} \label{probm}
  \min_{x\in\Real^{n}}~~ f(x),
   \end{equation}
  where $f: \Real^{n} \rightarrow \Real$ is locally Lipschitz continuous in its domain and differentiable  on an open dense subset of $ \Real^n$. We are interested in the specific case where $f$ is continuously differentiable almost everywhere, but possibly not differentiable at minimizers.}

To solve \eqref{probm}, Loreto {\it et al.} in \cite{Loreto:19} proposed the spectral subgradient method. {It combines a nonmonotone line search~\cite{GriLaLu:86}, a globalization scheme~\cite{wmr2:05}}, and the classical subgradient approach using the the Barzilai-Borwein (spectral) step length~\cite{BarBor:88}. {The significant advantage of the spectral step is that its computation is independent of the optimal function value}. In \cite{Loreto:19}, extensive numerical experimentation was presented, {comparing six classic subgradient methods with the spectral subgradient  with the method of performance profiles~\cite{Dolan02}}. The numerical experiments demonstrated the spectral subgradient method's superiority compared to these methods.

Moreover, Loreto and collaborators have explored combining the spectral step with subgradients and other subdifferential approaches. In \cite{Loreto:07}, they developed the spectral projected subgradient method for the minimization of non-differentiable functions on convex sets. {Successful numerical results were presented for two distinct problem types: set covering and generalized assignment problems, and convergence} results were shown later in \cite{Loreto:15} and \cite{Loreto:16}. In \cite{Loreto:17}, they developed and analyzed the nonsmooth spectral gradient method for unconstrained problems in which the spectral step is combined with subdifferential approaches, such as the simplex gradient and the sampling gradient.

The spectral subgradient method represents an extension to the nonsmooth case of the spectral gradient method developed by Raydan~\cite{Raydan:93,Raydan:97}.  In \cite{Raydan:97}, the author found that the use of gradient descent with spectral step length for unconstrained (smooth) nonlinear optimization provided several advantages over approaches based on the conjugate gradient method for this problem. Inspired by this, Birgin \& Mart\'{\i}nez proposed a spectral conjugate gradient (SCG) method~\cite{Bm01}, which combines the spectral step with conjugate gradient directions. They studied several formulas for determining the conjugate directions (which we describe in Section~\ref{sec:2.1}), and determined that the use of the spectral step provided improved results over the standard conjugate gradient method.

Several modifications to SCG have been proposed subsequently. Andrei proposed the SCALCG algorithm~\cite{andrei07a,andrei07b}, {including a quasi-Newton step to ensure the production of descent directions.} In~\cite{zzl06,dc08}, the authors propose instead modifying the choice of spectral step to guarantee descent, and prove convergence using both Armijo and Wolfe line searches. Subsequent authors have built off these early works by proposing modified formulas for generating search directions with desirable properties, e.g.~\cite{zz08,CL10,andrei2010new,LP12,dwc13,LFZ19}.
Inspired by the advantages of the approximate optimal stepsize approach \cite{LZHL17,LZHL18} and SCG modification to achieve descent directions \cite{Yu08}, Wang {\it et al.}~\cite{wang2020new} presented a new strategy to choose the spectral and conjugate parameters, with the advantage of producing sufficient descent directions and not requiring more cost per iteration.

It is well known the spectral gradient methods (BB methods) perform better than the classical steepest descent method. In \cite{drs056}, the authors aimed to improve the poor practical behavior of the steepest descent method (SD) by analyzing the convex quadratic case and using the second-order information provided by the step to accelerate the convergence of the method, achieving computational results comparable to the BB methods. Many other improvements to the BB methods have been proposed, such as accelerating  the convergence of the BB methods by adding some monotone steps and guaranteeing finite termination \cite{HDLXZ22}.

In addition to work on SCG-type methods, the BB method has also been used and modified to solve many constrained optimization problems, such as image deblurring, compressed sensing, and other inverse problems (\cite{bonettini2008scaled, FMRW07} and references therein), and applications coming from distributed machine learning and multiperiod portfolio optimization~\cite{CDVM23}, among others. Moreover, in the case of a feasible region with single linear equality and lower and open bounds, Crisci {\it et al.}~\cite{CPFVZ20} analyzed how these constraints influence the spectral properties of the BB rules, generating new BB approaches capturing second-order information and exploiting the nature of the feasible region. Furthermore,  a deep analysis of the spectral properties of the BB rules is presented in \cite{Serena23}, when the rules are adopted with the proximal gradient method, and comparisons with well-known state-of-the-art methods are included.


Nearly all of the previously studied extensions to SCG assume {$f$ is continuously differentiable}. It is interesting to note that an extension of the original conjugate gradient method to minimize nondifferentiable functions using subgradients was first presented many years ago by Wolfe~\cite{wolfe74,wolfe75}.  In the interim, however, there appears to have been little work done in the area of using conjugate subgradients for nonsmooth problems, despite some recent papers~\cite{NT14,KSM19,RAR23}.


{In this paper, we introduce the spectral conjugate subgradient (SCS) method as a novel approach for solving nonsmooth problems. The inspiration for this method stems from the SCG approach. Our primary objective is to expand the application of spectral methods to nonsmooth problems, given the success of smooth spectral methods in solving a wide range of practical problems (Birgin et al. \cite{Bmr09, Bmr14})} We note that an extension of spectral CG methods to nonsmooth problems was recently considered in~\cite{RAR23}, who develop a spectral subgradient method for solving the absolute value equation (AVE). Our work experiments with different choices of search direction, and also consider a broader class of problems, including an applied problem in tomographic imaging.

{The organization of this paper is as follows: Section~\ref{S:methodology} provides a description of the SCS method, while Section~\ref{S:CT} introduces an application of interest which involves the use of the nonsmooth total variation (TV) function as a regularizer for several image reconstruction problems in computed tomography (CT). In Section~\ref{S:convergence} we provide some comments on the convergence of the SCS algorithm. Section~\ref{S:experiments} presents numerical results based on a set of nonsmooth problems, including the CT problem, using performance profiles. Finally, Section~\ref{S:conclusions} offers some concluding remarks}. A general proof of convergence for subgradient descent algorithms using nonmonotone line search is provided in Appendix~\ref{A:proof}.

\section{Methodology}
\label{S:methodology}

\subsection{Spectral Conjugate Gradient Method}\label{sec:2.1}

{The original spectral CG (SCG) method for unconstrained optimization was proposed by Birgin \& Mart\'{\i}nez \cite{Bm01}, under the assumption that the objective function $f$ is continuously differentiable at all points.} Letting $g_k$ denote the gradient at the $k$th iterate $x_k$, the vectors $s_k=x_{k+1} -x_{k}$ and $y_{k}=g_{k+1}-g_k$ are defined for $k=0,1,2,\dots$. Beginning from an initial guess $x_0$, the subsequent iterates are given by
\[x_{k+1}= x_k + \alpha_kd_k,\]
where $\alpha_k$ is selected using a Wolfe line search (described in the next section), and the conjugate directions $d_k\in \Real^n$ are generated by
\begin{equation}
d_{k+1} = -\theta_kg_{k+1} +\beta_ks_k. \label{E:searchdir}
\end{equation}

{The spectral step parameter, denoted by $\theta_{k}$, was initially proposed by Barzilai and Borwein \cite{BarBor:88} and subsequently refined by Raydan~\cite{Raydan:93,Raydan:97}:}

 \begin{equation}
   \label{formula}
  \theta_{k} = \frac{s_{k}^T s_{k}}{s_{k}^T y_{k}},
   \end{equation}
with $\theta_0$ typically chosen to be 1. As the algorithm requires $\theta_k$ to be non-negative and bounded, a safeguarding procedure is often used (e.g.~\cite{Loreto:19}):
\begin{equation}
   \label{proc2.1}
     \theta_{k+1} = \left\{
	       \begin{array}{ll}
		\min\{\theta_{\max},\displaystyle\frac{1}{\parallel s_{k}\parallel}\}     & \mathrm{if \ } s_{k}^Ty_{k} \le 0 \\
		\min\{\theta_{\max},\max\{\theta_{\min},\displaystyle\frac{s_{k}^Ts_{k}}{s_{k}^Ty_{k}}\}\}     & \mathrm{if\ } s_{k}^Ty_{k}>0
		 \end{array},
	     \right.
   \end{equation}
   where $\displaystyle 0<\theta_{\min}<\theta_{\max}<\infty$.

The parameter $\beta_k$ can be chosen in several ways; in~\cite{Bm01}, the authors proposed the following three choices:
\begin{align}
   \label{formula1}
\beta_k&=\frac{(\theta_ky_k-s_k)^Tg_{k+1}}{s_k^Ty_k}\\
   \label{formula2}
\beta_k&=\frac{\theta_ky_k^Tg_{k+1}}{\alpha_k\theta_{k-1}g_k^Tg_k}\\
   \label{formula3}
\beta_k&=\frac{\theta_kg_{k+1}^Tg_{k+1}}{\alpha_k\theta_{k-1}g_k^Tg_k}
\end{align}
noting that under certain assumptions, these are equivalent to the formula introduced by Perry \cite{Perry:78} and its further modifications by Polak–Ribi\`ere  and Fletcher–Reeves, respectively. With any of these choices, it is possible that the search direction $d_{k+1}$ computed by \eqref{E:searchdir} may fail to be a descent direction; in this case, the authors proposed setting $d_{k+1} = -\theta_k g_{k+1}$ to ``restart'' the algorithm.

In their paper, Birgin \& Mart\'{\i}nez studied the performance of several variants of this approach, including the three choices for $\beta_k$, using $\theta_k = 1$ instead of the spectral step (corresponding to the classical conjugate gradient approach), and a heuristic for the initial choice of stepsize $\alpha$ when performing the Wolfe line search. Ultimately it was determined that the Perry formula~\eqref{formula1} using the spectral step and the heuristic for Wolfe line search gave the best performance.


This work presents an extension of the SCG method that allows for $g_k$ to be a subgradient at points where $f$ is non-differentiable. This algorithm can also be interpreted as a generalization of the spectral subgradient method ~\cite{Loreto:19}, which corresponds to the special case where $\beta_k = 0$ in equation~\eqref{E:searchdir}. Additionally, we investigate two different line search strategies: the nonmonotone line search used by the spectral subgradient method, and the Wolfe line search used in \cite{Bm01}. We describe these two choices of line search in the next section.

\subsection{Line Search Strategies}
\label{sec:2.2}
As mentioned previously, the proposed spectral conjugate subgradient can use one of two line search strategies: nonmonotone, or Wolfe line searches.
\subsubsection{Nonmonotone Line Search}
\label{sec:2.2.1}
Historically, the spectral step has been paired with the nonmonotone line search described below, based on Raydan's work \cite{Raydan:97}. The reason for this pairing is the nonmonotone behavior of the step. So, it is natural to adjust the line search scheme with the nonmonotone globalization strategy of Grippo {\it et al.} \cite{GriLaLu:86} combined with the proposed globalization scheme of La Cruz {\it et al.} \cite{wmr2:05}.
\begin{align} \label{glb1}
f(x_k+\alpha d_k) &\leq  \max_{0 \leq j \leq \min\{k,M\}} f(x_{k-j}) + \gamma \alpha g_k^Td_k + \eta_k, \text{with}\\
\label{propeta}
0 < \sum_k \eta_k &= \eta < \infty.
\end{align}

{The value of $\alpha>0$ is determined by a backtracking process beginning with $\alpha=1$. For the sequence $\eta_k$, we use $\displaystyle\eta_k=\frac{\eta_0}{k^{1.1}}$, which guarantees that (\ref{propeta}) holds. This sequence helps the line search to be satisfied at early iterations. $M$ is a fixed integer with $M \geq 0$, and $0 < \gamma < 1$ is a small sufficient decrease parameter.}


The nonmonotone behavior of the line search is induced by the terms $\displaystyle\max_{0 \leq j \leq \min\{k,M\}} f(x_{k-j})$ and $\eta_k$. The parameter $M$ allows the function value to increase or remain stationary for up to $M$ iterations, which is suitable for the behavior of the spectral step, since the spectral step is associated with the eigenvalues of the Hessian at the minimum point, rather than the function values, as noted by ~\cite{fletcher:1990,glunt:1993}.
\subsubsection{Wolfe Line Search}
\label{sec:2.22}
In \cite{Bm01}, the authors use the Wolfe conditions to guarantee convergence.  Although the Wolfe conditions assume that $f$ is differentiable, we included the Wolfe line search option in our algorithm to be compared with the nonmonotone line search, since we assume $f$ is differentiable almost everywhere. Given $0<\gamma<\sigma<1$, we require that the step size $\alpha$ satisfy
\begin{equation}\label{glb2}
f(x_k + \alpha d_k) \leq f(x_k) + \gamma \alpha g_k^Td_k
\end{equation}
and
\begin{equation}
\label{glb3}
\nabla f(x_k +\alpha d_k)^Td_k\geq \sigma g_k^Td_k
\end{equation}
in every iteration, where $g_k$ is a subgradient of $f$ at $x_k$.

\subsection{Spectral Conjugate Subgradient Algorithm}
\label{sec:2.3}

Recall that a subgradient of $f$ at a point $x$ is any vector $g \in \Real^n$ that satisfies the inequality 
\begin{equation*}f(y)\geq f(x) +g^T(y-x) ,~~\forall y \in \Real^n. 
\end{equation*}
The set of all subgradients at the point $x$ is given by the subdifferential of $f$ at $x$, denoted $\partial f (x)$; a function is subdifferentiable if this set is nonempty everywhere. A necessary condition at a minimizer of $f$ is for $0 \in \partial f(x)$. Using the definitions of $s_k$ and $y_k$ from Section~\ref{sec:2.1}, but where $g_k$ is now a subgradient, we define the spectral conjugate subgradient (SCS) algorithm as follows:\\

\noindent
{\bf Algorithm SCS:}\\
\noindent
{Let $x_0\in \Real^n$ be given, and let $g_0$ represent a subgradient at $f(x_0)$. We define $d_0=-g_0$, and set $k=0$ and $MAXITER$ as the maximum allowable number of iterations. Let $M$ be an integer with $M \geq 0$, let $0<\theta_{\min}<\theta_{\max}<\infty$, let $\eta_{0}=\max(f(x_0),\parallel g_0\parallel)$, and let $0<\gamma<\sigma<1$. Then:}\\

\noindent
Repeat until $k=MAXITER$

\begin{enumerate}
\item Compute $\alpha$ based on the line search, either nonmonotone (\ref{glb1}), or Wolfe ((\ref{glb2}) and (\ref{glb3}))
\item Define $\alpha_k=\alpha$ and $x_{k+1}=x_k+\alpha_kd_k$
\item Compute $\theta_k$ by (\ref{proc2.1}) and $\beta_k$ by (\ref{formula1}),(\ref{formula2}), or (\ref{formula3})
\item Define $d = -\theta_kg_{k+1} +\beta_ks_k$
{\item If $d^Tg_{k+1}\leq -10^{-3}\parallel d \parallel_2 \parallel g_{k+1}\parallel_2$
\subitem then $d_{k+1}=d$
\subitem else $d_{k+1}=-\theta_kg_{k+1}$}
\item $k=k+1$
\end{enumerate}

\noindent
{\bf Remarks:}
\begin{itemize}
\item Step 5 is the heuristic suggested by Birgin \& Mart\'{\i}nez to guarantee $d$ is a descent direction. If the angle between $d$ and $-g_{k+1}$ lies outside of $[-\frac{\pi}{2}, \frac{\pi}{2}]$ (to some small tolerance), the algorithm is restarted to use the direction of the negative subgradient.
\item For values of $x_{k+1}$ where $f$ is differentiable, the subgradient $g_{k+1}$ is equal to the gradient $\nabla f(x_{k+1})$.
\item When $\beta =0$ and the nonmonotone line search (\ref{glb1}) is used, the SCS algorithm becomes the spectral subgradient method described in \cite{Loreto:19}.
\item When using the Perry formula~\eqref{formula1} for $\beta_k$, it is possible in nonsmooth problems for the denominator $s_k^Ty_k$ to be zero. In this case we simply set $\beta_k = 0$, equivalent to the spectral subgradient method.
\item For the Wolfe line search ((\ref{glb2}) and (\ref{glb3})), we adopted the implementation by Taviani \cite{Taviani}
\item {The algorithm terminates upon reaching the maximum iteration count ($MAXITER$)}, and the best value $f_{\min}$ is reported, along with the corresponding point $x$.
\end{itemize}

\section{CT problem with Total Variation}
\label{S:CT}

In computed tomography (CT) imaging, a two- or three-dimensional image of a compactly supported function is recovered from a discrete sampling of the so-called X-ray transform of the function. This transform consists of integrals along every possible line through the function; in the two-dimensional case, which we consider in this article, it is equivalent to the better-known Radon transform. The simplest type of two-dimensional discretization, known as parallel-beam geometry, is obtained by equal spacing of both the affine and angular parameters in the X-ray transform, producing a set of measurements known as a sinogram. Figure~\ref{F:CTimg1} shows an example of a simple two-dimensional object (the Shepp-Logan phantom) and its sinogram.

Discretizing both the domain of the function and the X-ray transform results in a linear system of equations:

\begin{equation}
Ax + \eta = b,
\end{equation}
where $x\in\Real^{n}$ is the non-negative image being reconstructed, $b \in \Real^m$ is the sinogram, and $A$ is the $m \times n$ system matrix representing the geometry of the CT system. The vector $\eta$ represents measurement noise that arises naturally during the real-life CT imaging process. When $\Vert \eta \Vert$ is small, and the X-ray transform of the object is well-sampled, the image can be reconstructed by solving the non-negative least-squares problem:
\begin{equation}
\min_{x\in\Real^{n}_+}~~\frac{1}{2} \Vert Ax - b \Vert^2, \label{E:lsq}
\end{equation}
typically using an iterative method, due to the large size of the matrices involved. In more challenging circumstances, one must instead solve a regularized problem:
\begin{equation}
\min_{x\in\Real^{n}_+}~~\frac{1}{2} \Vert Ax - b \Vert^2 + \mu \phi(x),\label{E:lsqreg}
\end{equation}
where $\phi: \Real^n \to \Real$ is an appropriate regularization function, and $\mu > 0$ is a weighting parameter. Two scenarios of this type are if $\Vert \eta \Vert$ is large, or if the angular sampling of the X-ray transform is insufficient to recover the high-frequency components of $x$. Both of these situations arise as a result of trying to reduce dose to a patient in medical CT imaging; the former is often called low-dose imaging, and the latter is sparse-view imaging. In both cases, solving the unregularized problem \eqref{E:lsq} results in poor quality images, as shown in Figure~\ref{F:CTimg1}.

\begin{figure}
\caption{CT imaging example. Left: true $400 \times 400$ pixel digital Shepp-Logan phantom. Center-left: parallel-beam sinogram corresponding to 360 views taken over 180$^\circ$; the affine parameter is the $y$-axis and the angular parameter the $x$-axis. Center-right: low-dose image reconstructed using unregularized least-squares with 20\% Gaussian noise added to $b$. Right: Sparse-view mage reconstructed using unregularized least squares with only 60 views taken over 180$^\circ$ }\label{F:CTimg1}
\includegraphics[width=\linewidth]{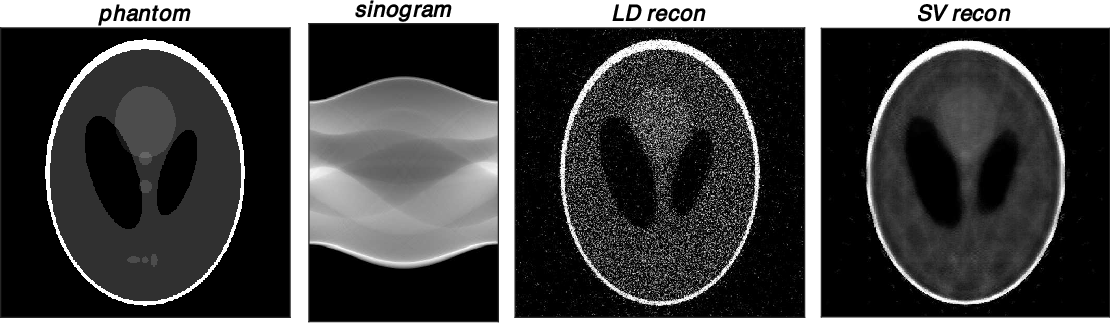}
\end{figure}

A common choice of regularizer $\phi(x)$ is the isotropic total variation (TV) function:

\begin{equation}
\phi_{TV}(x) = \sum_{1 \leq i,j \leq N-1} \sqrt{\left(x_{i,j+1}-x_{i,j}\right)^2 +  \left(x_{i+1,j}-x_{i,j}\right)^2},\label{E:TV}
\end{equation}
in which $x$ is reshaped as an $N \times N$ image (with $N^2 = n$), indexed by row and column. A routine computation gives the partial derivatives of $\phi_{TV}$ as:
\begin{align}
\frac{\partial}{\partial x_{i,j}} \phi_{TV} (x) &= \frac{2x_{i,j} - x_{i+1,j} - x_{i,j+1}}{\sqrt{ \left(x_{i+1,j}-x_{i,j}\right)^2 + \left(x_{i,j+1}-x_{i,j}\right)^2}} \label{E:dTV}\\
&~~~+ \frac{x_{i,j} - x_{i,j-1}}{\sqrt{(x_{i,j} - x_{i,j-1})^2 + (x_{i+1,j-1} - x_{i,j-1})^2}}\notag \\
&~~+  \frac{x_{i,j} - x_{i-1,j}}{\sqrt{(x_{i,j} - x_{i-1,j})^2 + (x_{i-1,j+1} - x_{i-1,j})^2}}.\notag
\end{align}
Clearly the derivative is undefined whenever the denominator of any of the three terms in \eqref{E:dTV} is zero. This situation arises frequently as it occurs when pixels have the same values as their neighbors; furthermore, such images are likely to be minimizers of~\eqref{E:lsqreg} as they correspond to images with small TV. Note, however, that the numerator of the respective term must also be zero whenever the denominator is zero. Following~\cite{ND10}, we therefore define a subgradient by simply setting any term in \eqref{E:dTV} with a denominator of zero equal to zero.

\section{Convergence Analysis}\label{S:convergence}

In Appendix~\ref{A:proof} we provide a general proof of convergence for a subgradient descent algorithm using the nonmonotone line search. The proof parallels that of~\cite{Griva} (Theorem 11.7) for the smooth case with Armijo conditions. The proof requires the following hypotheses on the objective function $f$:
\begin{enumerate}
\item $f$ is convex and continuously differentiable almost everywhere, but not necessarily at minimizers.
\item $f$ is bounded below, and the level set \[\Omega(x_0)=\{x\in \Real^n\ \mid f(x)\leq f(x_0)\}\] is compact, for any starting point $x_0\in \Real^n$. 
\item The subgradient $g$ is Lipschitz continuous on $\Omega(x_0)$ ; namely, there exists $L > 0$ such that
\begin{equation*}\parallel g_x-g_y \parallel \leq L \parallel x-y\parallel, \;\;\forall x,y\in \Omega(x_0),~ g_x \in \partial f(x), g_y \in \partial f(y)
\end{equation*}
\item The norm of the subgradients is bounded, namely, there exists a constant $G>0$ such that $\parallel g \parallel \leq G$ for all $g \in
\partial f(x)$ and $x \in \Omega(x_0)$. Note that this occurs as a consequence of (2) and (3) (see \cite{zzl06}).\end{enumerate}

Additionally, the proof requires that the search directions $d_k$ generated by the algorithm satisfy the following properties, where $g_k \in \partial f(x_k)$:
\begin{enumerate}
\itemsep1em
\item  $\displaystyle \frac{d_k^T g_k}{\Vert d_k \Vert \Vert g_k \Vert} \leq -\epsilon$, where $\epsilon > 0$, and
\item There exist constants $\mu,\nu \geq 0$ such that $\mu \Vert g_k \Vert \leq \Vert d_k \Vert \leq \nu$ for all $k$.
\end{enumerate}
The first condition ensures that $d_k$ is not too close to being perpendicular to the negative subgradient, and is guaranteed to hold in our case, as a result of the heuristic in Step 5 of Algorithm SCS. The second condition ensures that the norm of $d_k$ is bounded above, and also not too small relative to the norm of $g_k$. This second condition straightforwardly holds in the case when $d_k = -\theta_{k-1}g_k$, but is difficult to prove when $d_k = -\theta_{k-1}g_k + \beta_{k-1} s_{k-1}$, especially given that there are three different choices of parameter $\beta_k$. 

In fact we believe it may not hold in some cases; in particular, in the numerical experiments conducted in Section~\ref{sec:5.1}, we found that while the norm of $d_k$ was generally not too large relative to the norm of $g_k$, it was occasionally much smaller; in some cases, by up to 10 orders of magnitude. We therefore experimented with a modified version of the algorithm which rescales $d_k$ in such cases. Specifically, given specified values of $\mu$ and $\nu$, we replaced Steps 4 and 5 of algorithm SCS with the following procedure, which rescales the direction accordingly:
\begin{enumerate}
\setcounter{enumi}{3}
\item Define $d = -\theta_kg_{k+1} +\beta_ks_k$
{\item If $d^Tg_{k+1} > -10^{-3}\parallel d \parallel_2 \parallel g_{k+1}\parallel_2$
\subitem then $d_{k+1}=-\theta_kg_{k+1}$}\\
{else if $\Vert d \Vert > \nu$
\subitem then $d_{k+1} = \nu \frac{d}{\Vert d \Vert}$}\\
{else if $\Vert d \Vert < \mu \Vert g_k \Vert$
\subitem then $d_{k+1} = \mu \Vert g_k \Vert \frac{d}{\Vert d \Vert}$} \\
{else
\subitem then $d_{k+1} = d$}
\end{enumerate}
In our numerical experiments with this modified version, however, we did not see any improvement in the performance of the algorithm, even in those cases where the norm of $d_k$ became very small. We therefore consider only the version of Algorithm SCS specified in Section~\ref{sec:2.3}. We note that the absence of a convergence guarantee does not preclude the algorithm from being useful in practice; for example, it is well-known that quasi-Newton methods for minimization of smooth functions tend to also work well on nonsmooth problems, despite a lack of convergence theory; see for example~\cite{LO13}.

\section{Numerical Results}
\label{S:experiments}

{We provide numerical results that demonstrate the performance of the proposed method using a collection of widely recognized nonsmooth benchmark problems frequently employed for evaluation purposes. Additionally, we showcase an application in CT reconstruction with the utilization of total variation, as outlined in Section~\ref{S:CT}}.

\subsection{Nonsmooth test problems}
\label{sec:5.1}

{The collection of problems consists of 10 nonsmooth minimization problems, taken from~\cite{Napsu07} and detailed in Table \ref{table:tst14}. This table provides information such as the optimal value $f_* = f(x_*)$ and the dimension $n$. For further details on this problem set, including suggested initial points, please refer to \cite{Napsu07}.}

\begin{table}[htbp]
\centering
\caption{Properties of the ten nonsmooth minimization problems.}
\label{table:tst14}
\footnotesize
\begin{tabular}{|c|c|c|}\hline
Prob                 & $f_*$             & n   \\\hline \hline
 P1: MAXQ            & 0.0               & 20             \\
 P2: MXHILB          & 0.0               & 50      \\
 P3: Chained LQ      & $-(n-1)2^{1/2}$    & 2              \\
 P4: Chained CB3 I   & $2(n-1)$          & 20             \\
 P5: Chained CB3 II  & $2(n-1)$          & 20              \\
 P6: Activefaces     & 0.0               & 2             \\
 P7: Brown 2         & 0.0               & 2              \\
 P8: Mifflin 2       & -34.795           & 50              \\
 P9: Crescent I      & 0.0               & 2                \\
 P10:Crescent II     & 0.0               & 2                \\ \hline
\end{tabular}

\end{table}

We are comparing a total of 8 versions of the spectral conjugate subgradient method, corresponding to two choices of line search (nonmonotone or Wolfe) and four different approaches to calculate the parameter $\beta$ in \eqref{E:searchdir}. We let $\beta_0=0$ correspond to the original spectral subgradient method, and $\beta_1$, $\beta_2$ and $\beta_3$ correspond to the formulas (\ref{formula1}), (\ref{formula2}), and (\ref{formula3}) respectively. Each algorithm was run for $MAXITER = 1000$ iterations, and the the best function value, called $f_{\min}$, was reported.

{We evaluate the performance of each algorithm by analyzing the results using performance profiles~\cite{Dolan02}}. A detailed explanation of how we adopted the profiles is given in \cite{Loreto:19}.
{Considering a set of solvers $S$, which in this scenario includes the 8 variations of the aforementioned algorithm, and the set of problems $P$ given in Table~\ref{table:tst14}, the performance ratio for each solver-problem pair is defined as follows:}

\[r_{p,s}=\frac{t_{p,s}}{\min\{t_{p,s}:s \in S\}},\]
where {the performance measure $t_{p,s}$ represents a specific metric, e.g. computation time required  by solver $s$ to solve problem $p$.} The performance of solver $s$ is then assessed by computing a function
\begin{equation*}
\rho_s(\tau)=\frac{1}{n_p}size\{p\in P:r_{p,s} \leq \tau \}.
\end{equation*}
{Here  $n_p$ denotes the total number of problems in the set $P$, and $\tau$ is a parameter ranging between 1 and $r_M$. The max value $r_M$ is determined to satisfy $r_M\geq r_{p,s}$ for all $p$ in the set $P$ and $s$ in the set $S$.} This function is then plotted for all solvers $s$ to produce the performance profiles; the best-performing algorithms have profiles in the upper left of the plot, and worse-performing algorithms in the lower right. Typically a log scale is used for $\tau$ for ease of visualization; {then the value of $\rho_s(\tau)$ at zero (when $\tau = 1$) provides the percentage of problems in which solver $s$ achieved the best performance.}

{In our experiments, we focus on three different performance metrics: the relative error between $f_{\min}$ and $f_{*}$ (if $f_{*}$ is non-zero), or the absolute error if $f_{*}$ is zero; the total number of function evaluations needed to obtain $f_{\min}$; and total CPU time consumed by the algorithm. An algorithm is considered to solve a problem $p$ if the error between $f_{\min}$ and $f_{*}$  is less than $10^{-1}$. If solver $s$ fails to solve problem $p$, $r_{p,s}$ is set equal to $r_M$.}

To represent the 8 methods in the performance profiles, we use the notation $WB_i$ for the Wolfe line search and the four choices of $\beta$, ($i=1,\ldots 4$). Likewise, we set the notation $NMB_i$ for the nonmonotone line search.

The performance profile using error as the performance measure is shown in Figure~\ref{fig:figure1}. The plot shows the algorithm with Wolfe line search and parameter $\beta = 0$, denoted $WB_0$, as the most precise solver overall since it solves accurately $50\% $ of the problems, and $70\%$ of them if a small error is tolerated. However, $WB_0$ didn't solve the other $30\%$ of the problems based on our criteria. This makes $WB_0$ the most accurate algorithm but the least robust.

On the other hand, $NMB_2$ solves $100\%$ of the problems when some error is tolerated, though only $20\%$ with maximum accuracy. This makes $NMB_2$ the most robust algorithm. It is worth mentioning $WB_2$ and $WB_3$ as good alternatives to combine accuracy and robustness. $WB_2$ solves accurately $50\%$ of the problems and $90\%$ of them when some error is tolerated, similar to $WB_3$ except this algorithm is slightly less accurate. We note that $NMB_0$, originally presented at \cite{Loreto:19}, is outperformed by both $WB_0$ and $NMB_2$ in terms of accuracy and robustness, respectively.

\begin{figure*}[ht!]
\caption{Performance profile based on error between $f_{\min}$ and $f_{*}$ using the nonmonotone line search with M=7, and the Wolfe line search per each parameter $\beta$.}
\label{fig:figure1}       
\includegraphics[width=0.85\linewidth]{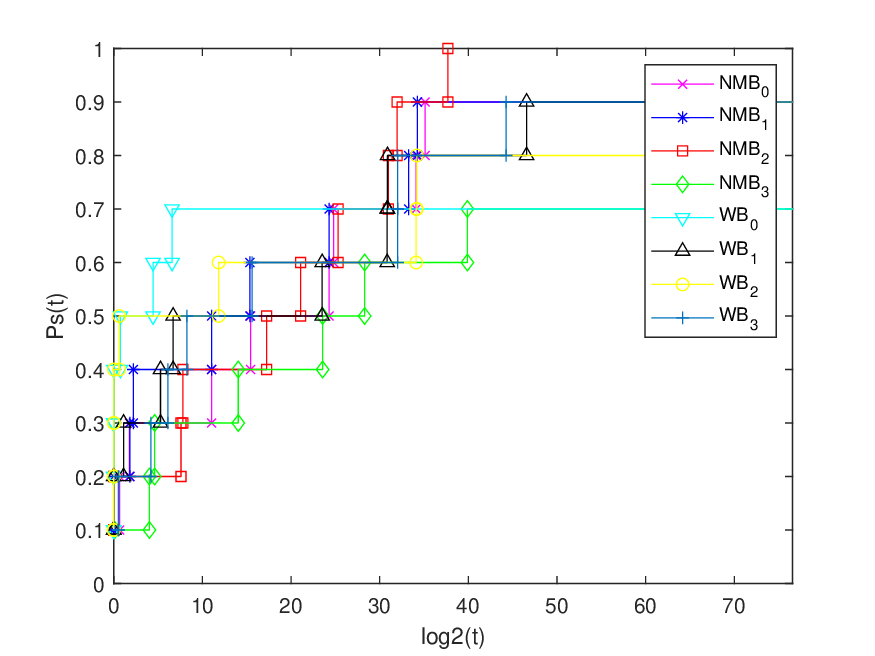}
\end{figure*}
The performance profiles provide a useful framework for comparing different algorithms based on different criteria. When analyzing performance in terms of errors, though, knowing the magnitude of the errors can provide additional relevant information since, in practical cases, being more accurate for a problem in relative terms may not be meaningful the errors are already quite small. Table \ref{table:tb2} shows the minimum error achieved by each algorithm for each problem.

\begin{table}[ht!]
\caption{Error between $f_{\min}$ and $f_{*}$ using the nonmonotone line search with M=7, and the Wolfe line search per each parameter $\beta$}
\label{table:tb2}
\footnotesize
\centering
\footnotesize
\begin{tabular}{|c|c|c|c|c|c|c|c|c|} \hline
Prob & $NM\beta_0$ & $NM\beta_1$ & $NM\beta_2$ & $NM\beta_3$ &$W\beta_0$ &$W\beta_1$& $W\beta_2$& $W\beta_3$\\ \hline
P1 & 0.000e+00 & 0.000e+00 & 4.244e-06 & 3.753e-06 & 9.625e-21 & 1.191e-08 & 3.642e-04 & 3.798e-08 \\
P2 & 3.269e-01 & 3.890e-01 & 7.147e-02 & 4.352e-01 & 7.549e-01 & 7.565e-01 & 7.259e-01 & 8.492e-01 \\
P3 & 4.530e-03 & 4.530e-03 & 9.178e-03 & 7.290e-02 & 2.201e-10 & 2.201e-10 & 2.224e-10 & 4.012e-09 \\
P4 & 8.887e-03 & 3.026e-10 & 6.660e-04 & 3.741e-03 & 1.179e-01 & 3.619e-03 & 1.098e-06 & 1.514e-05 \\
P5 & 5.551e-02 & 5.780e-02 & 1.627e-02 & 1.772e-02 & 2.729e-02 & 3.507e-02 & 2.485e-02 & 2.354e-02 \\
P6 & 8.220e-03 & 7.770e-03 & 2.880e-02 & 3.124e-03 & 5.290e-01 & 1.918e-05 & 1.848e-07 & 5.639e-05 \\
P7 & 2.645e-02 & 1.462e-02 & 2.983e-03 & 1.367e-01 & 7.105e-13 & 1.406e-03 & 1.325e-02 & 3.117e-03 \\
P8 & 2.645e-02 & 1.462e-02 & 2.983e-03 & 1.367e-01 & 1.420e-12 & 2.739e-03 & 2.777e-02 & 6.156e-03 \\
P9 & 1.254e-04 & 4.262e-04 & 1.765e-02 & 2.252e-03 & 1.979e-03 & 3.554e-03 & 9.177e-05 & 6.270e-03 \\
P10 & 2.843e-04 & 2.844e-04 & 3.007e-05 & 2.190e-06 & 1.347e-07 & 1.347e-07 & 1.361e-07 & 1.915e-07 \\ \hline
\end{tabular}
\end{table}

\begin{figure*}[ht!]
\caption{Performance profile based on the number of function evaluations using the nonmonotone line search with M=7, and the Wolfe line search per each parameter $\beta$}.
\label{fig:figure2}
\includegraphics[width=0.85\linewidth]{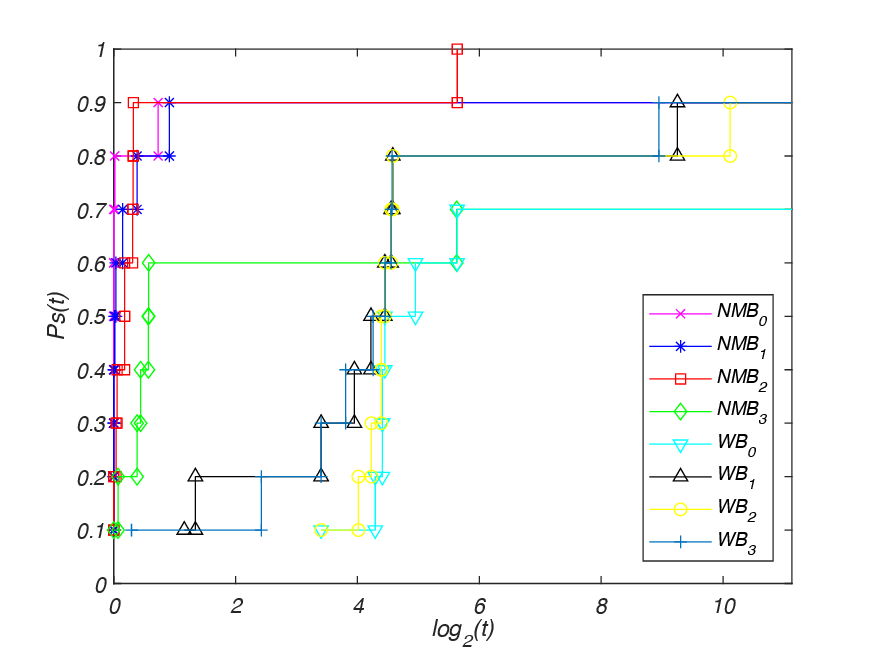}
\end{figure*}

Figure \ref{fig:figure2} shows the performance profile based on the total number of function evaluations needed to find $f_{min}$. It shows the spectral subgradient method $NMB_0$\cite{Loreto:19} as the most efficient algorithm. $NMB_0$ solves $80\%$ of the problems with the fewest function evaluations. Similarly, $NMB_1$ presents as an efficient option, slightly worse than $NMB_0$. However, $NMB_0$ and $NMB_1$ were unable to solve one problem to within the specified threshold.

On the other hand, $NMB_2$ solves $100\%$ of the problems within the specified threshold while requiring only somewhat more function evaluations than $NMB_0$ and $NMB_1$. This would make the trade-off robustness versus efficiency worthy, making $NMB_2$ the best alternative in our opinion.

Figure \ref{fig:figure2} also shows that the nonmonotone line search considerably outperforms the Wolfe line search with respect to the number of function evaluations required. Indeed, solvers using Wolfe line search are clearly on the bottom right side of the chart, meaning they solve the fewest problems at the highest cost. It is worth mentioning that Figure \ref{fig:figure1} showed $WB_0$ as the most precise solver and $WB_2$ as a suitable alternative in terms of precision. But Figure \ref{fig:figure2} shows them both as the most expensive in terms of function evaluations, which makes these options unattractive.

\begin{figure*}[ht!]
\caption{Performance profile based on CPU using the nonmonotone line search with M=7, and the Wolfe line search per each parameter $\beta$}
\label{fig:figure3}       
\includegraphics[width=0.85\linewidth]{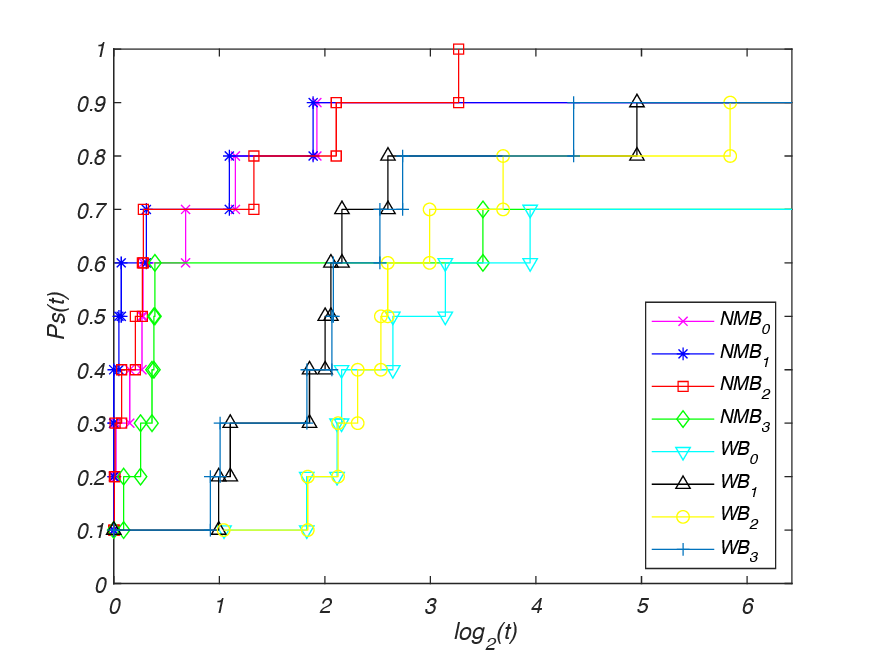}
\end{figure*}

With respect to CPU time, Figure \ref{fig:figure3} shows quite similar behavior when compared to the profile based on function evaluations (Figure~\ref{fig:figure2}). {This outcome is anticipated since evaluating the function is the primary factor that drives the computational effort of these algorithms.} $NMB_2$ seems to be the best alternative, being more robust than $NMB_0$ and $NMB_1$, given it solves $100\%$ of the problems, although with slightly worse performance.

After analyzing all performance profiles, we can conclude that $NMB_2$ is the best alternative when factoring in all the criteria. This is based on its high robustness, as it was the only method that solved all ten problems to within the specified threshold. $NMB_2$ also provides solutions with accuracy comparable with the rest of the solvers, although clearly outperformed by $WB_0$ on this measure. And, $NMB_2$ is in the group of top performing solvers based on computation time, with only slightly poorer performance than $NMB_0$ and $NMB_1$.

{In conclusion, we can state that the} Wolfe line search tends to improve the accuracy of the algorithms, but clearly at the expense of many more function evaluations, and hence computational effort.

\subsection{CT problem with total variation}
\label{sec:5.2}

In this section, we study the effect of the choice of parameter $\beta$ when applying Algorithm SCS to a set of TV-regularized CT reconstruction problems. All experiments were run using the AIRTools II Matlab toolbox~\cite{HJ18} to generate parallel-beam CT system matrices $A$ and the corresponding sinograms $b$. An image size of $400 \times 400$ pixels was used, resulting in a problem size of $n = 160,000$. We then applied Algorithm SCS to the regularized least squares problem \eqref{E:lsqreg}, using TV \eqref{E:TV} as the regularizer, whose subgradient was computed using \eqref{E:dTV} and the subsequently discussed method.

A total of 45 reconstruction problems were considered, corresponding to:
\begin{itemize}
\item Three different images (phantoms) from the AIRTools II toolbox: {\tt shepplogan}, {\tt threephases}, and {\tt grains}. All three phantoms are piecewise constant, making them well-suited to total variation regularization. They are also all elements of the $n$-dimensional box $[0,1]^n \subset \Real^n$.
\item Five different imaging scenarios: three low-dose scenarios with Gaussian noise of 1\%, 5\% and 10\% added to $b$, and two sparse-view scenarios using  60 and 30 angular views. In the three low-dose scenarios, a total of 360 views and 566 lines through the object were simulated, giving a sinogram size of $m = 203,760$; this is reduced by a factor of 6 and 12, respectively, for the two sparse-view scenarios. The sparse-view scenarios did not include noise.
\item Three different values of $\mu$, controlling the weighting of the regularization term in \eqref{E:lsqreg}. For the three low-dose scenarios, these were $\mu = 25$, $\mu = 250$, and $\mu = 2500$, while for the sparse-view scenarios, we used $\mu = 0.5$, $\mu = 5$, and $\mu = 50$. These values provide different weighting on the nonsmooth (TV) component of the function, and were found to give good results on at least one imaging scenario; in general, more challenging problems (high noise or small number of views) required a larger value of $\mu$ to regularize the problem effectively. Note that in the sparse-view scenarios, the size of the first term in \eqref{E:lsqreg} is smaller than in the low-dose scenarios (since the sinogram size is smaller), which is why a smaller value of $\mu$ can be used.
\end{itemize}

Additionally, because the desired solution to each problem is a vector $x \in [0,1]^n$, we modified Algorithm SCS to include projection onto this set (denoted $\Omega$). This is accomplished as follows: before every iteration of Step 1, redefine $d_k$ as
\begin{equation}
d_k = P_{\Omega} (x_k + d_k) - x_k,
\end{equation}
where $P_{\Omega}(x) = \min\{\max\{x,0\},1\}$ is the projection onto $\Omega$. This guarantees that $x_{k} \in \Omega$ for all $k$, provided that the step size $\alpha_k$ does not exceed 1. As this is not the case for the Wolfe line search, in this section we only use the nonmonotone line search with $M=7$. We, therefore, consider only four algorithms, namely NM$\beta_0$ to NM$\beta_3$ from the previous section. We note that the use of Wolfe line search is prohibitively expensive for this problem in any event, due to the high computational cost of function evaluations. Algorithm SCS was run for 200 total iterations for every case, or until $\Vert g_k \Vert \leq 10^{-10} $; though in practice, the algorithm ran for the maximum number of 200 iterations in every instance. While this may indicate that the algorithm is being stopped before convergence is achieved, we note that the image quality shown later in Figure~\ref{F:CTimages_final} is satisfactory, for the most part. The initial point used was $x_0 = 0 \in \Real^n$. 

Figure~\ref{F:CTprofileFbest} shows the performance profile based on the lowest objective function value, $f_{\min}$ found by each solver for each problem. Note that this differs from the measure used for the nonsmooth test problems (relative or absolute difference between $f_{\min}$ and the true optimum $f_*$), since $f_*$ is unknown for these problems. A solver was considered to have solved the problem only if $f_{\min}$ was within 10\% of the lowest value found. We observe that the choice of $\beta_2$ provides the best overall results, solving 80\% of the problems to a high degree of accuracy; furthermore, it was the only solver to solve all problems within the 10\% threshold. The choice of $\beta_1$ was the next best-performing, solving about 85\% of the problems to within the specified threshold, but generally to a lower accuracy than when using $\beta_2$. The choice of $\beta_3$ and $\beta_0$ (the spectral subgradient method) performed poorly in comparison, solving only about 50\% and 33\% of the problems to the specified tolerance, respectively.

\begin{figure*}[ht!]
\caption{Performance profile for CT reconstruction problems, based on lowest objective function value $f_{\min}$ found by Algorithm SCS using nonmonotone line search with each choice of $\beta$.}
\label{F:CTprofileFbest}       
\includegraphics[width=0.85\linewidth]{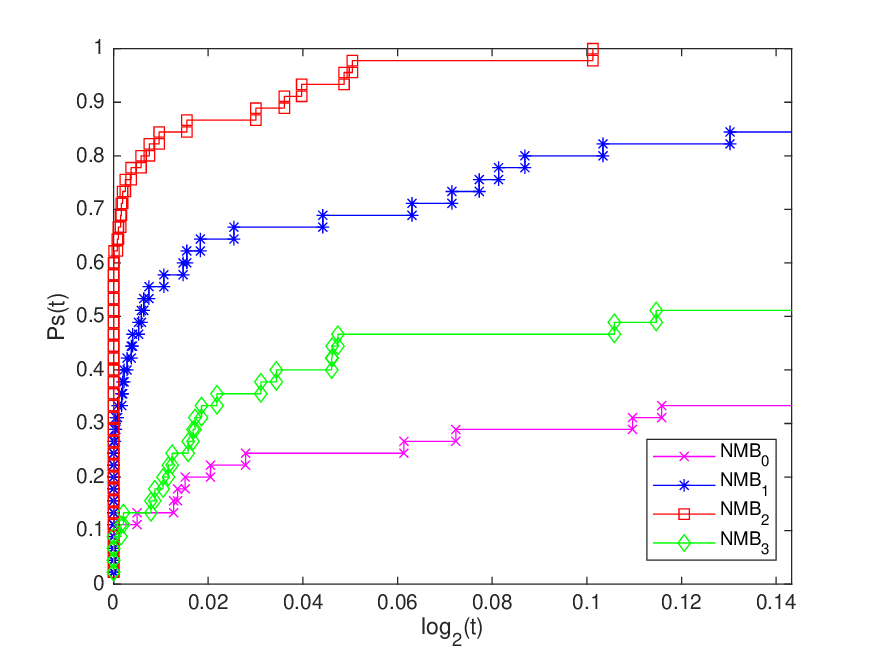}
\end{figure*}

Figure~\ref{F:CTprofileFevals} shows the performance profile generated for the four solvers, using the total number of function evaluations as the performance measure. The profile based on CPU time is omitted, as the results were essentially the same. Unlike for the nonsmooth test problems (Figure~\ref{fig:figure2}), we observe that the SCS algorithm using $\beta_2$ required significantly fewer function evaluations than when using $\beta_1$ or $\beta_3$. As a total of 200 iterations was run for each solver, this indicates that the direction chosen as a result of using $\beta_2$ generally resulted in less backtracking than the other methods. We note that the spectral subgradient method ($\beta_0$) similarly required few function evaluations when successful; however, as noted in the previous paragraph, it failed to solve two thirds of the problems to the acceptable threshold.

\begin{figure*}[ht!]
\caption{Performance profile for CT reconstruction problems, based on number of function evaluations used by Algorithm SCS using nonmonotone line search with each choice of $\beta$. }
\label{F:CTprofileFevals}       
\includegraphics[width=0.85\linewidth]{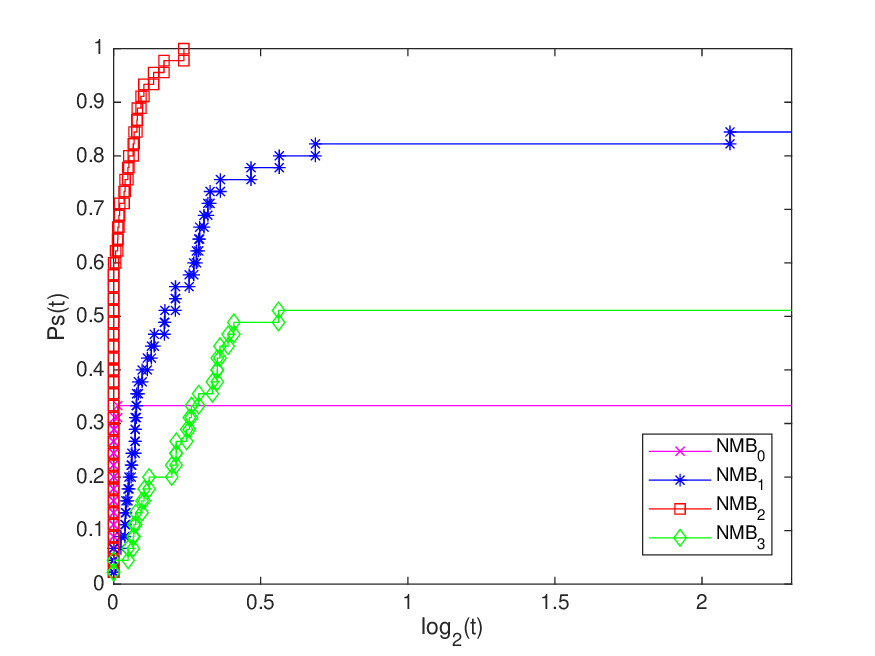}
\end{figure*}

Figure~\ref{F:CTimages_final} shows the best images reconstructed by Algorithm SCS using $\beta_2$. We can observe that the reconstructed images are generally of high quality, although there are still artifacts in some of the images corresponding to more challenging scenarios, such as low-dose imaging with 10\% Gaussian noise (final column). We note that this is a particularly challenging scenario, as we wished to test the performance of the algorithm under a wide variety of conditions; a noise level closer to 1\% is often used (see e.g.~\cite{JJHJ12}). Fine-tuning the choice of the weighting parameter, $\mu$, could also improve image quality in these cases, although it is difficult to determine a parameter that works well in all scenarios; for example, the choice of $\mu = 2500$ gave the best results for the 10\% Gaussian noise scenario for the {\tt threephases} and {\tt grains} phantoms, but poorer results for the {\tt shepplogan} phantom than when $\mu = 250$ was used.

\begin{figure*}[ht!]
\caption{Best images reconstructed by Algorithm SCS using $\beta_2$. The first column shows the true phantom images used to generate the data, while the remaining five columns show images reconstructed for the five scenarios (sparse-view imaging with 60 and 30 views, and low-dose imaging with 1\%, 5\% and 10\% Gaussian noise, respectively). The regularization parameter $\mu$ giving the best results is shown.}\label{F:CTimages_final}
\includegraphics[width=\linewidth]{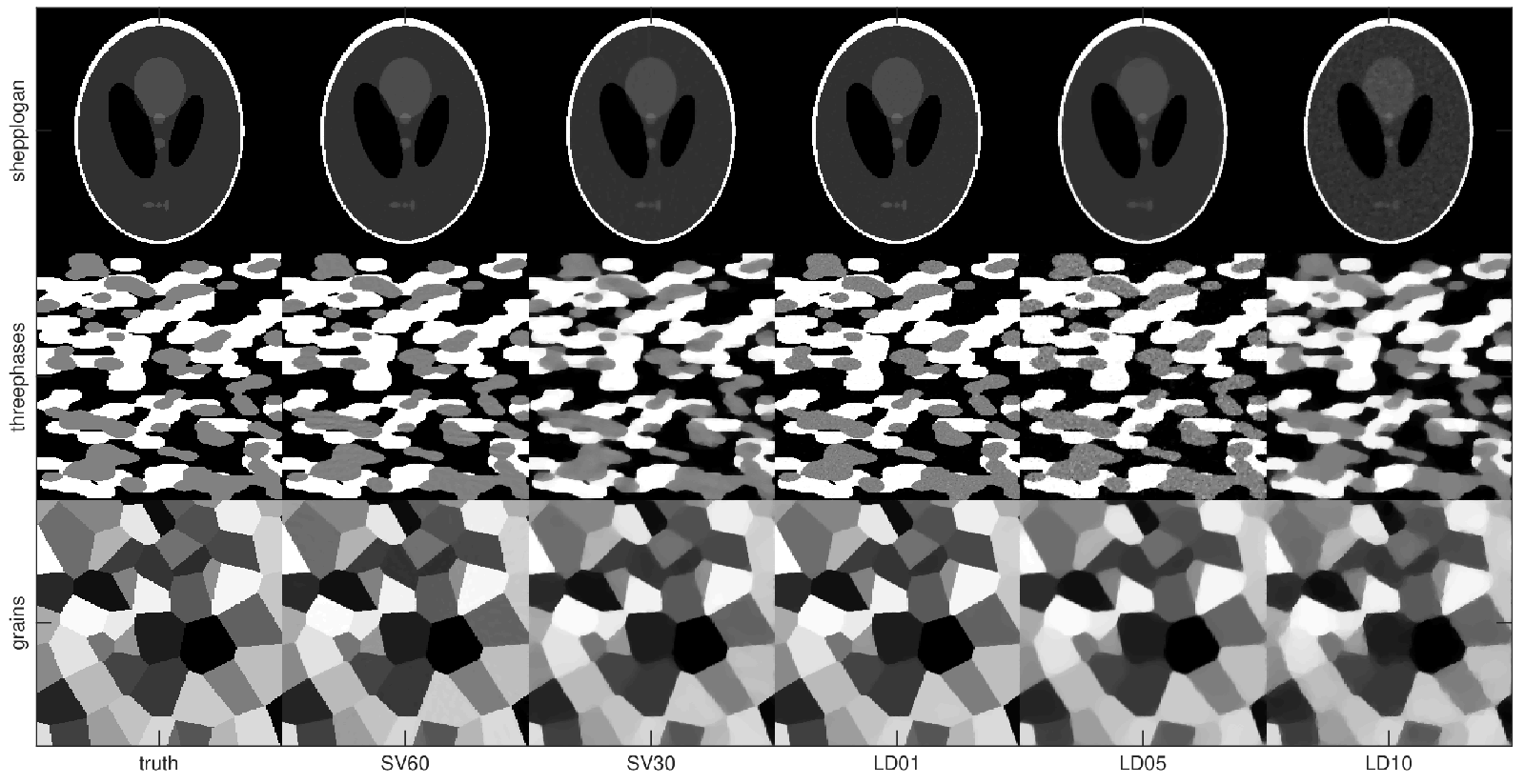}
\end{figure*}

The performance profile in Figure~\ref{F:CTprofileFbest} is generated based on the minimal value of the function $f$ given in (\ref{E:lsqreg}). While this is a sensible way to evaluate the performance of the algorithm in the context of optimization, for an imaging application, the quality of the image produced is also of interest. Two common metrics used to assess the quality of a reconstructed image $x$ versus a ground truth image $y$ are peak signal to noise ratio (PSNR) and Structural Similarity index (SSIM)\cite{SSIM}. The PSNR is given by
\begin{align}
PSNR(x,y) &= 10 \log_{10} \left( \frac{y_{max}}{MSE(x,y)} \right), \textrm{where} \label{E:PSNR} \\MSE(x,y) &= \frac{1}{N}\sum_j \left(x_{j} - y_{j}\right)^2, \: \textrm{ and } y_{max} = \max_j \{ y_j \}\notag, 
\end{align}
with higher PSNR corresponding to better agreement between the two images. SSIM is computed using the formula
\begin{align}
SSIM(x,y) &= \frac{ (2 \mu_x \mu_y + C_1)(2 \sigma_{xy} + C_2)}{(\mu_x^2 + \mu_y^2 + C_1)(\sigma_x^2 + \sigma_y^2 + C_2)},\label{E:SSIM}
\end{align}
where $\mu_x, \mu_y$ are the means of the two images, and $\sigma_x, \sigma_y$ and $\sigma_{xy}$ the variances and covariance, respectively; the constants $C_1$ and $C_2$ are included to avoid instability when the denominator is small. The SSIM produces a score between 0 and 1, with 1 corresponding to identical images. 

In Table~\ref{T:PSNR} we show the best SSIM and PSNR values obtained by each choice of $\beta$ for each of the 15 imaging scenarios. The value of $\mu$ giving the best values of PSNR and SSIM was used in each case. The reason for this is that if the value of $\mu$ is chosen to be too large or too small, then an algorithm that does the best job of minimizing $f(x)$ might in fact end up producing an image of lower quality; for example, by oversmoothing the image if $\mu$ is large. The table confirms our earlier observation that the choice of $\beta_2$ is the best-performing one, although with respect to these metrics, using $\beta_1$ provides generally comparable results. The images produced using $\beta_3$ had very low PSNR and SSIM scores in several cases, bringing the average performance of the algorithm down, while $\beta_0$ provided the poorest results overall.

\begin{table}
\caption{Best PSNR and SSIM values attained by each algorithm for each imaging scenario. The overall mean and standard deviation values for each column are shown at the bottom.}\label{T:PSNR}
\footnotesize
\begin{tabular}{ll|rr|rr|rr|rr}
		&&$\beta_0$		&&$\beta_1$		&&$\beta_2$		&&$\beta_3$	\\
		&&PSNR	&SSIM	&PSNR	&SSIM	&PSNR	&SSIM	&PSNR	&SSIM\\
  \hline
LD01	&shepplogan	&28.76	&0.978	&42.64	&0.989	&43.36	&0.996	&41.08	&0.996\\
	&threephases	&27.64	&0.901	&31.45	&0.934	&31.97	&0.950	&31.25	&0.914\\
	&grains	       &30.42	&0.873	&32.32	&0.968	&32.22	&0.975	&30.83	&0.952\\
LD05	&shepplogan	&29.04	&0.840	&31.51	&0.978	&30.97	&0.981	&29.02	&0.907\\
	&threephases	&19.32	&0.670	&22.82	&0.759	&22.73	&0.770	&22.43	&0.728\\
	&grains	       &24.74	&0.754	&26.49	&0.909	&26.29	&0.906	&25.01	&0.608\\
LD10	&shepplogan	&21.40	&0.844	&30.11	&0.961	&30.45	&0.933	&26.49	&0.918\\
	&threephases	&17.88	&0.459	&18.76	&0.657	&18.51	&0.636	&17.74	&0.470 \\
	&grains	       &18.73	&0.743	&24.49	&0.862	&25.35	&0.873	&21.64	&0.363\\
SV60	&shepplogan	&33.00	&0.968	&42.60	&0.998	&40.24	&0.996	&43.27	&0.998\\
	&threephases	&22.13	&0.762	&29.15	&0.885	&30.20	&0.969	&29.68	&0.961\\
	&grains	       &26.68	&0.832	&30.07	&0.945	&36.07	&0.988	&39.17	&0.993\\
SV30	&shepplogan	&29.93	&0.926	&36.15	&0.991	&37.73	&0.990	&32.86	&0.978\\
	&threephases	&18.19	&0.608	&21.88	&0.802	&22.30	&0.805	&20.53	&0.737\\
	&grains	       &23.34	&0.728	&29.43	&0.945	&29.08	&0.938	&30.15	&0.939\\
\hline									
mean                &&	24.75	&0.792            &29.99 	&0.906 &30.50 &0.914 &29.41 	&0.857  \\
stdev               &&$\pm$ 4.83 &$\pm$ 0.136	 &$\pm$ 6.60 &$\pm$ 0.095	 &$\pm$ 6.67	 &$\pm$ 0.100	&$\pm$ 7.27 &$\pm$ 0.178
\end{tabular}
\end{table}

\section{Final Remarks}
\label{S:conclusions}
In this article, we develop a new Spectral Conjugate Subgradient (SCS) method for nonsmooth unconstrained optimization, by combining conjugate directions proposed by Birgin \& Mart\'{\i}nez~\cite{Bm01} and the spectral subgradient method~\cite{Loreto:19}. We investigate the use of two different line search strategies and several choices of the parameter $\beta$ used to determine conjugate directions. We present the results of numerical experiments with standard nonsmooth test problems, as well as problems from CT imaging using total variation regularizations. {The results are examined utilizing performance profiles, which enable the comparison of various approaches in terms of precision and computational cost.}

The numerical results show that the combination of the nonmonotone line search with the parameter $\beta_2$ (analogous to the Polak–Ribi\`ere formula), which we denote by $NMB_2$, was the most successful approach. This is based on its high robustness, given that it solved all the problems to within a specified threshold, for both the nonsmooth test problems and the CT imaging problems. Additionally, the computational effort required by $NMB_2$ was on par with the least expensive approaches in both sets of experiments.

It is interesting to note that in Birgin \& Mart\'{\i}nez' original article they found that the choice of $\beta_1$ (Perry's formula) was the best option, while the best option in our experiments was $\beta_2$. It is difficult to compare the results directly, given that their experiments were only on smooth problems, using Wolfe line search. However, a possible explanation for why the choice of $\beta_2$ works better in our experiments is that the formula for $\beta_1$ has a denominator of $s_k^Ty_k$, which is more often equal to zero when solving nonsmooth problems than for differentiable problems. Indeed, in our numerical experimentation we noticed this term is often zero for some problems. In this case, we set $\beta_1 = 0$, defaulting to the spectral subgradient approach. On the other hand, when the problem is differentiable, the Wolfe line search ensures that $s_k^Ty_k > 0$, avoiding this issue.


As discussed in Section~\ref{S:convergence} and Appendix~\ref{A:proof}, we have established sufficient conditions for the algorithm to converge, which may not always be satisfied by the implementation of SCS described in this paper. Future work could involve investigating variations of the method for which rigorous guarantees of convergence can be established.

%

\section*{Disclosure statement}

{The authors do not have any significant financial or non-financial interests to declare.}

\section*{Funding}

{The National Science Foundation provided support for this work (grant DMS-2150511).}

\section*{Notes on contributor(s)}

M. Loreto and T. Humphries co-authored the manuscript. C. Raghavan, K. Wu and S. Kwak participated in a Research Experience for Undergraduates (REU) funded by the above-mentioned NSF Grant in Summer 2022, providing initial implementation of the spectral conjugate subgradient method and preliminary experimentation.

\section*{Data Availability Statement}
{The authors can provide the MATLAB code and data used in this work upon request.}

%
%
%
%
%

\section{References}

\bibliographystyle{plain}
\bibliography{hybridOpt.bib}

\appendix

\section{General proof of convergence for nonmonotone line search with subdifferentiable functions}\label{A:proof}

We first prove two lemmas concerning subdifferentiable functions that are needed for the proof.

\begin{lemma}\label{L:lemma1} Let $f: \mathbb{R}^n \to \mathbb{R}$ be subdifferentiable, and let $x, p \in \mathbb{R}^n$. Define $F:\mathbb{R} \to \mathbb{R}$ as $F(t) = f(x+tp)$. Then $g^Tp \in \partial{F}(t)$ for any $g \in \partial f (x+tp)$
\end{lemma}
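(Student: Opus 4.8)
The plan is to unwind the definition of the subdifferential given in Section~\ref{sec:2.3} directly, with no machinery beyond a single well-chosen substitution. Observe first what the conclusion actually asks for: by that same definition, the statement $g^Tp \in \partial F(t)$ is equivalent to the one-variable subgradient inequality $F(s) \geq F(t) + (g^Tp)(s-t)$ holding for every $s \in \mathbb{R}$. So the whole task reduces to deriving this scalar inequality from the $n$-dimensional one that $g \in \partial f(x+tp)$ provides.

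First I would fix an arbitrary $g \in \partial f(x+tp)$ and abbreviate $z = x+tp$. The defining property of the subgradient then reads $f(y) \geq f(z) + g^T(y-z)$ for all $y \in \mathbb{R}^n$. The key step is to specialize this inequality to the points lying on the line $t \mapsto x+tp$, that is, to take $y = x+sp$ for an arbitrary scalar $s$. Then the displacement is $y - z = (x+sp)-(x+tp) = (s-t)p$, so $g^T(y-z) = (s-t)\,g^Tp$, while $f(y) = F(s)$ and $f(z) = F(t)$ by definition of $F$.

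Substituting these identities into the subgradient inequality yields $F(s) \geq F(t) + (g^Tp)(s-t)$ for every $s \in \mathbb{R}$, which is precisely the subgradient inequality for $F$ at $t$ with candidate subgradient $g^Tp$. Hence $g^Tp \in \partial F(t)$, and since $g$ was an arbitrary element of $\partial f(x+tp)$, the claim holds for every such $g$. I would also remark that subdifferentiability of $f$ guarantees $\partial f(x+tp) \neq \emptyset$, so the statement is not vacuous.

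There is no real obstacle here: the argument is a one-line restriction of the global subgradient inequality to a line, and it uses only the definition of $\partial f$, not convexity or differentiability of $F$. The only point requiring a moment's care is the bookkeeping of the substitution $y = x+sp$ and the observation that $y-z$ collapses to a scalar multiple of $p$; once that is in place, the inequality transfers verbatim. This lemma is the subgradient analogue of the elementary chain-rule fact that the directional slope of $f$ along $p$ is $g^Tp$, and I expect it to serve as the bridge that lets the later convergence argument treat the nonmonotone line search as a one-dimensional problem in $t$.
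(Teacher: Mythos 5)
Your proof is correct and takes essentially the same approach as the paper: both arguments simply restrict the defining subgradient inequality for $g \in \partial f(x+tp)$ to points $y = x+sp$ on the line, so that $y-(x+tp) = (s-t)p$ and the scalar inequality $F(s) \geq F(t) + (g^Tp)(s-t)$ drops out immediately. In fact your write-up states the inequality in the correct direction, whereas the paper's displayed chain $F(t) - F(s) \geq g^Tp(t-s)$ has it reversed (a sign typo: with $g \in \partial f(x+tp)$ the definition yields $F(s) - F(t) \geq g^Tp(s-t)$, which is exactly what you derived and what the conclusion $g^Tp \in \partial F(t)$ requires).
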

\begin{proof}Let $g \in \partial f(x + t p)$. Then by definition,
\begin{align*}
F(t) - F(s) = f(x + tp) - f(x+sp) \geq g^T(tp - sp) = g^Tp (t -s),
\end{align*}
and so $g^Tp \in \partial F(t)$. 
\end{proof}
Note: This is a special case of Theorem 3.51 from~\cite{Nguyen}.

\begin{lemma}\label{L:lemma2}
Suppose $f: \mathbb{R}^n \to \mathbb{R}$ is subdifferentiable and its subgradient is Lipschitz continuous with constant $L$, and let $g \in \partial f(x)$. Then for all $x,p \in \mathbb{R}^n$,
\begin{align*}
\vert f(x+p) - f(x) - g^T p \vert \leq \frac{L}{2} \Vert p \Vert^2
\end{align*}
\end{lemma}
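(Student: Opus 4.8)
The plan is to reduce the multivariate statement to a one-dimensional integration argument, exactly as in the smooth ``descent lemma'', using Lemma~\ref{L:lemma1} to control the derivative of the restriction of $f$ to the segment joining $x$ and $x+p$. First I would introduce $F(t) = f(x+tp)$ for $t \in [0,1]$. Since $f$ is subdifferentiable on all of $\mathbb{R}^n$ it is convex, so $F$ is convex on $\mathbb{R}$, hence locally Lipschitz and therefore absolutely continuous on $[0,1]$. This guarantees that $F$ is differentiable almost everywhere and that the fundamental theorem of calculus applies, yielding
\[ f(x+p) - f(x) = F(1) - F(0) = \int_0^1 F'(t)\,dt. \]

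Next I would identify $F'(t)$ with a directional subgradient term. By Lemma~\ref{L:lemma1}, $g_t^T p \in \partial F(t)$ for every $g_t \in \partial f(x+tp)$; at any $t$ where $F$ is differentiable the convex subdifferential $\partial F(t)$ reduces to the singleton $\{F'(t)\}$, so $F'(t) = g_t^T p$ for any chosen $g_t \in \partial f(x+tp)$. Substituting this into the integral and subtracting the constant $g^T p$ (with $g \in \partial f(x)$ as in the statement) gives
\[ f(x+p) - f(x) - g^T p = \int_0^1 (g_t - g)^T p \, dt. \]

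The final step is a routine estimate. Applying the Cauchy--Schwarz inequality inside the integral and then the Lipschitz hypothesis, in the form $\Vert g_t - g \Vert \leq L \Vert (x+tp) - x \Vert = L t \Vert p \Vert$, bounds the right-hand side by $\int_0^1 L t \Vert p \Vert^2 \, dt = \frac{L}{2}\Vert p \Vert^2$, which is the claimed inequality.

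I expect the main obstacle to be the rigorous justification of the integral representation in the first step, since $F$ is only differentiable almost everywhere rather than everywhere. The resolution rests on two observations: a convex (hence locally Lipschitz) function on a compact interval is absolutely continuous, so the fundamental theorem of calculus holds with $F'$ defined a.e.; and measurability is not an issue, because at each point of differentiability the value $g_t^T p$ is forced to coincide with the single number $F'(t)$, so no measurable selection of the set-valued map $t \mapsto \partial f(x+tp)$ is needed. Once this is in place, the remaining manipulations are elementary.
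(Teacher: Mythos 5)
Your proof is correct and follows essentially the same route as the paper's: restrict $f$ to the segment via $F(t)=f(x+tp)$, represent $F(1)-F(0)$ as $\int_0^1 g_t^Tp\,dt$ using Lemma~\ref{L:lemma1}, and finish with Cauchy--Schwarz plus the Lipschitz bound $\Vert g_t-g\Vert\leq Lt\Vert p\Vert$. The only difference is in how the integral representation is justified: the paper cites a result from the literature (Proposition 1.6.1 of~\cite{Niculescu}) for the subgradient selection, while you derive it directly from convexity (implied by everywhere-subdifferentiability) via absolute continuity and the a.e.\ singleton subdifferential, which correctly disposes of the measurable-selection issue and makes the step self-contained.
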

\begin{proof}  Define $F(t)$ as in Lemma~\ref{L:lemma1}. Following Proposition 1.6.1 from~\cite{Niculescu}, we have for $F:\Real \to \Real$:
\begin{align*}
F(1) - F(0) = \int_0^1  \phi(t) \: dt
\end{align*}
where $\phi$ is a function such that $\phi(t) \in \partial F (t)$ for all $t \in (0,1)$. By Lemma~\ref{L:lemma1}, we can define $\phi(t) = h^T p$ where $h \in \partial f(x+tp)$. Therefore (following the proof of Lemma 4.1.12 from [2], for differentiable functions), we have
\begin{align*}
f(x+p) - f(x) - g^Tp &= \left(\int_0^1 h^Tp \: dt\right) - g^Tp \\
&= \int_0^1 (h-g)^T p \: dt \\
\vert f(x+p) - f(x) - g^T p \vert &\leq \int_0^1 \Vert h - g \Vert \Vert p \Vert \: dt, ~\text{by the Cauchy-Schwarz inequality.}
\end{align*}
Since $h \in \partial f(x+tp)$, $g \in \partial f(x)$, Lipschitz continuity then gives $\Vert h - g \Vert \leq L \Vert tp \Vert$, so 
\begin{align*}
\Vert f(x+p) - f(x) - g^T p \Vert &\leq L \Vert p \Vert^2 \int_0^1 \: t dt\\
&\leq \frac{L}{2} \Vert p \Vert^2
\end{align*}
\end{proof}
We now prove two properties related to the sequence generated by the nonmonotone line search. Given $g_k \in \partial f(x_k)$ and a search direction $d_k$, the nonmonotone line search chooses the smallest value $h_k \in \{0,1,2,\dots\}$ such that $\alpha_k = \sigma^{h_k}$ satisfies:
\begin{align}
f(x_k + \alpha_k d_k ) \leq \max_{0 \leq j \leq M} f(x_{k-j}) + \gamma \alpha_k g_k^T d_k + \eta_k,
\end{align}
where $M \geq 0$, $0 < \sigma < 1$, $0 < \gamma < 1$, and $\sum_{k=0}^\infty \eta_k < \infty$. The formula above differs slightly from (\ref{glb1}), since we are considering limiting behavior and therefore ignore the case $k < M$. For convenience, we will also assume $\sigma = \frac{1}{2}$ in the proofs below; other choices of $\sigma$ still lead to convergence, but introduce inconvenient constants. 

\begin{lemma}\label{L:lemma3} Suppose $f: \mathbb{R}^n \to \mathbb{R}$ is subdifferentiable and its subgradient is Lipschitz continuous with constant $L$. Under the nonmonotone line search condition, either $\alpha_k~=~1$, or we have the following lower bound on $\alpha_k$:
\begin{align*}
\alpha_k &\geq \frac{\gamma-1}{L} \frac{g_k^T d_k}{\Vert d_k \Vert^2}
\end{align*}
\end{lemma}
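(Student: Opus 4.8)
The plan is to follow the standard backtracking argument, using that $\alpha_k = 2^{-h_k}$ is the \emph{largest} admissible step of this form (recall $\sigma = \tfrac12$). If $\alpha_k = 1$ there is nothing to prove, so I would assume $\alpha_k < 1$, i.e. $h_k \geq 1$. Then minimality of $h_k$ forces the previous candidate $2\alpha_k = 2^{-(h_k-1)}$ to have \emph{failed} the line-search test, which gives the strict reverse inequality
\[
f(x_k + 2\alpha_k d_k) > \max_{0 \leq j \leq M} f(x_{k-j}) + \gamma\,(2\alpha_k)\, g_k^T d_k + \eta_k.
\]

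First I would discard the two terms on the right that work in our favor. Since the $j=0$ term shows $\max_{0 \leq j \leq M} f(x_{k-j}) \geq f(x_k)$, and since $\eta_k > 0$, both can be dropped to obtain the cleaner lower bound $f(x_k + 2\alpha_k d_k) - f(x_k) > 2\gamma\alpha_k\, g_k^T d_k$. Next I would invoke Lemma~\ref{L:lemma2} with $p = 2\alpha_k d_k$ to bound the same difference from \emph{above}: because $\Vert 2\alpha_k d_k\Vert^2 = 4\alpha_k^2 \Vert d_k\Vert^2$, the lemma yields $f(x_k + 2\alpha_k d_k) - f(x_k) \leq 2\alpha_k\, g_k^T d_k + 2L\alpha_k^2 \Vert d_k\Vert^2$. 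This is the one place nonsmoothness enters, and it is handled entirely by Lemma~\ref{L:lemma2} playing the role of the usual smooth descent/Taylor estimate.

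Chaining the two inequalities and cancelling the common positive factor $2\alpha_k$ then gives $(\gamma - 1)\, g_k^T d_k < L\alpha_k \Vert d_k\Vert^2$. Finally, dividing through by $L\Vert d_k\Vert^2 > 0$ produces
\[
\alpha_k > \frac{\gamma-1}{L}\,\frac{g_k^T d_k}{\Vert d_k\Vert^2},
\]
which is exactly the claimed bound (the strict inequality implying the stated non-strict one). I do not expect a genuine obstacle, as this is a routine adaptation of the smooth Armijo analysis; the only points needing care are sign-tracking — since $d_k$ is a descent direction, $g_k^T d_k < 0$, and $\gamma - 1 < 0$, so the right-hand side is positive and the bound is meaningful — and confirming that the $\max$ and $\eta_k$ terms can only strengthen the inequality we need.
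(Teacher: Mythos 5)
Your proof is correct, but it is organized differently from the paper's, and in fact more economically. The paper splits into two cases according to where $\displaystyle\max_{0 \leq j \leq M} f(x_{k-j})$ is attained: when the max equals $f(x_k)$ it runs essentially your argument (failed step $2\alpha_k$ plus the quadratic bound of Lemma~\ref{L:lemma2}), and when the max equals some earlier $f(x_{k-J})$ it introduces the hypothetical Armijo step $\alpha_k^*$ that would be chosen with $M=0$ and proves $\alpha_k \geq \alpha_k^*$ by contradiction, using $f(x_{k-J}) \geq f(x_k)$. Your single observation --- that in the \emph{violated} inequality
\begin{align*}
f(x_k + 2\alpha_k d_k) > \max_{0 \leq j \leq M} f(x_{k-j}) + 2\gamma\alpha_k g_k^T d_k + \eta_k \geq f(x_k) + 2\gamma\alpha_k g_k^T d_k
\end{align*}
the max and $\eta_k \geq 0$ sit on the side where weakening them preserves the strict inequality --- collapses both cases into one and eliminates the contradiction argument entirely; it is the direct formalization of the paper's closing remark that a nonmonotone step can never be shorter than the corresponding Armijo step. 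What the paper's longer route buys is only that this monotone-versus-nonmonotone comparison is made explicit as a standalone fact; logically it is unnecessary. Two small points to keep tidy: dropping $\eta_k$ needs only $\eta_k \geq 0$ (which holds here), and the division by $L\Vert d_k\Vert^2$ tacitly assumes $d_k \neq 0$, an assumption the paper makes implicitly as well since $d_k$ is a descent direction.
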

\begin{proof} Note that $\alpha_k = 1$ when $h_k = 0$, so we consider only the case $h_k \geq 1$, meaning $\alpha_k < 1$. There are two possibilities:
\begin{enumerate}
\item $\displaystyle \max_{0 \leq j \leq M} f(x_{k-j}) = f(x_k)$

In this case, since $\alpha_k < 1$, the line search condition must have been violated with step length $2 \alpha_k$:
\begin{align}
f(x_k + 2 \alpha_k d_k) - f(x_{k}) > 2\gamma \alpha_k g_k^T d_k + \eta_k.\label{E:l3e1}
\end{align}

By Lemma~\ref{L:lemma2}, we also have:
\begin{align}
f(x_k + 2 \alpha_k d_k) - f(x_{k}) -2  \alpha_k g_k^T d_k &\leq \frac{1}{2} L \Vert 2 \alpha_k d_k \Vert^2 \notag\\
f(x_{k}) - f(x_k + 2 \alpha_k d_k)  &\geq -2  \alpha_k g_k^T d_k - 2 \alpha_k^2 L \Vert d_k \Vert^2 -\eta_k,\label{E:l3e2}
\end{align}
where the absolute value can be removed on the left side since the quantity is positive, and $\eta_k$ can be subtracted on the second line because it is also positive. Adding (\ref{E:l3e1}) and (\ref{E:l3e2}) then gives:
\begin{align*}
0 &\geq 2\alpha_k (\gamma - 1)g_k^T d_k - 2 \alpha_k^2 L\Vert d_k \Vert ^2   \\
\alpha_k L\Vert d_k \Vert ^2 &\geq (\gamma - 1) g_k^T d_k \\
\alpha_k &\geq \frac{\gamma-1}{L} \frac{g_k^T d_k}{\Vert d_k \Vert^2},
\end{align*}
giving the desired lower bound. We now consider the second possibility:
\item  $\displaystyle \max_{0 \leq j \leq M} f(x_{k-j}) = f(x_{k-J})$, $1 \leq J \leq M$.

In this case, $\alpha_k$ is the first step size satisfying 
\begin{align*}
f(x_k + \alpha_k d_k ) \leq  f(x_{k-J}) + \gamma \alpha_k g_k^T d_k + \eta_k,
\end{align*}

Define $\alpha_k^*$ to be the step size that would be chosen in the case  $\displaystyle \max_{0 \leq j \leq M} f(x_{k-j}) = f(x_k)$, which was analyzed above. Suppose $\alpha_k < \alpha_k^*$, or equivalently, $\alpha_k^* = 2^n \alpha_k$ for some integer $n \geq 1$. Then it must be the case that
\begin{align*}
f(x_k + \alpha_k^* d_k) &> f(x_{k-J}) + \gamma \alpha_k^* g_k^T d_k + \eta_k
\end{align*}
since otherwise $\alpha_k^*$ would have been accepted as the step length, as it precedes $\alpha_k$ in the sequence $\{2^{-h_k}$\}. But since $f(x_{k-J}) \geq f(x_k)$, this implies
\begin{align*}
f(x_k + \alpha_k^* d_k) &> f(x_{k}) + \gamma \alpha_k^* g_k^T d_k + \eta_k,
\end{align*}
contradicting the definition of $\alpha_k^*$. Thus in this second case we must have $\displaystyle \alpha_k \geq \alpha^* \geq \frac{\gamma-1}{L} \frac{g_k^T d_k}{\Vert d_k \Vert^2}$ as well. Intuitively, the step size chosen by a nonmonotone line search with $M \geq 1$ can never be smaller than that chosen by the monotone (Armijo) line search corresponding to $M=0$, since the former is a less restrictive condition.
\end{enumerate}
\end{proof}

\begin{lemma}\label{L:lemma4} Let $\{f(x_k)\}_{k=0}^\infty$ be the sequence generated by the nonmonotone line search procedure from some $x_0 \in \Real^n$, and suppose further that $f(x) \geq C$ for all $x \in \Omega$, where $\Omega = \{x \mid f(x) \leq f(x_0) \}$ is compact. Then
\begin{align*}
 \sum_{k=0}^\infty \left[  \max_{0 \leq j \leq M} f(x_{k-j}) - f(x_{k+1})\right] < \infty
\end{align*}
\end{lemma}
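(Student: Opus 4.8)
The plan is to control the running maximum $W_k:=\max_{0\le j\le M} f(x_{k-j})$, show it converges, and then reduce the target series to a sum of ``windowed gaps.'' First I would extract the two basic consequences of the line search. Since $d_k$ is a descent direction the sufficient-decrease term $\gamma\alpha_k g_k^T d_k$ is nonpositive, so the accepted step obeys $f(x_{k+1})\le W_k+\gamma\alpha_k g_k^Td_k+\eta_k\le W_k+\eta_k$. Because each remaining entry $f(x_k),\dots,f(x_{k+1-M})$ of the window defining $W_{k+1}$ is already at most $W_k$, this immediately yields the ``almost non-increasing'' property $W_{k+1}\le W_k+\eta_k$.

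With $E_k:=\sum_{j\ge k}\eta_j$, which is finite and tends to $0$ by (\ref{propeta}), I would set $U_k:=W_k+E_k$; the bound above rearranges to $U_{k+1}\le U_k$, so $\{U_k\}$ is non-increasing. Iterating $W_{k+1}\le W_k+\eta_k$ also gives $f(x_k)\le f(x_0)+\eta$ for every $k$, so all iterates lie in the compact sublevel set $\{x: f(x)\le f(x_0)+\eta\}$, on which $f$ attains a finite lower bound. Hence $W_k$ and $U_k$ are bounded below, and a bounded non-increasing sequence converges: $U_k\to U_*$, and therefore $W_k\to V_*:=U_*$.

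The key reduction is to rewrite the summand through the nonnegative gaps $G_k:=W_k-f(x_k)\ge 0$. Splitting $W_k-f(x_{k+1})=(W_k-W_{k+1})+(W_{k+1}-f(x_{k+1}))$ and telescoping the first piece yields $\sum_{k=0}^{N}\big[W_k-f(x_{k+1})\big]=(W_0-W_{N+1})+\sum_{k=1}^{N+1}G_k$, whose first term converges to the finite quantity $W_0-V_*$. Thus the lemma is equivalent to $\sum_k G_k<\infty$; moreover, writing $G_k=f(x_{p(k)})-f(x_k)$ for the maximizing index $p(k)\in[k-M,k]$ and telescoping over the window shows $\sum_k G_k\le M\sum_k\big(f(x_k)-f(x_{k+1})\big)^+$, so it suffices to prove that the cumulative function-value decreases are summable.

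The main obstacle is precisely this last summability, which is where the nonmonotonicity bites. A naive telescoping of $W_k$ is not enough: it only delivers the lower bound $\sum_k[W_k-f(x_{k+1})]\ge(U_0-U_*)-\sum_k\eta_k$, since a large gap $G_{k+1}$ need not be reflected in the drop $W_k-W_{k+1}$ of the running maximum. To get the matching upper bound I would run a windowing/charging argument in the spirit of Grippo--Lampariello--Lucidi \cite{GriLaLu:86}: every iterate index belongs to at most $M+1$ of the successive maxima, and each time the maximum is refreshed by a newer iterate the corresponding decrease is a genuine sufficient decrease that can be quantified via the Armijo inequality together with Lemma~\ref{L:lemma2} (exactly as in the step-size bound of Lemma~\ref{L:lemma3}). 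Charging each windowed gap to such a decrease and summing against the finite total drop $W_0-V_*$ of the running maximum (up to the summable correction $\sum_k\eta_k$) then bounds $\sum_k G_k$, and hence the desired series. The delicate point is to carry out this charging while controlling the $(M+1)$-fold multiplicity, so that the resulting estimate does not degenerate into a vacuous inequality.
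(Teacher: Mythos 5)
Your first half is correct and cleanly executed: the bound $W_{k+1}\le W_k+\eta_k$ follows from the accepted-step inequality and the overlap of consecutive windows, the auxiliary sequence $U_k=W_k+E_k$ is indeed non-increasing and bounded below, so $W_k\to V_*$, and your identity $\sum_{k=0}^{N}\bigl[W_k-f(x_{k+1})\bigr]=(W_0-W_{N+1})+\sum_{k=1}^{N+1}G_k$ correctly reduces the lemma to $\sum_k G_k<\infty$. The genuine gap is that the last and only essential step --- summability of the windowed gaps, which you further reduce to $\sum_k\bigl(f(x_k)-f(x_{k+1})\bigr)^+<\infty$ --- is never proved, only deferred to a hoped-for GLL-style charging argument whose ``delicate point'' you yourself flag. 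Worse, that reduction appears to be a dead end given the ingredients you invoke: the line search penalizes (via the summable $\eta_k$) only ascents \emph{above} the running window maximum, while descents below $W_k$ and subsequent re-ascents back up to $W_k+\eta_k$ cost nothing. Concretely, with $M\ge 1$, function values alternating between levels $c$ and $c-1$ are consistent with the accepted-step inequality at every iteration (the window retains the entry at level $c$, so $W_k\equiv c$; the ascending steps are admissible whenever $\gamma\alpha_k\vert g_k^Td_k\vert\le\eta_k$, which descent directions with small directional derivative permit). In that regime $G_k=1$ infinitely often, both $\sum_k G_k$ and your majorant $M\sum_k(f(x_k)-f(x_{k+1}))^+$ diverge, and the budget $W_0-V_*$ you propose to charge against is zero. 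So no charging scheme built only from the accepted-step inequality, $\sum_k\eta_k<\infty$, the lower bound, and compactness can control the $(M+1)$-fold multiplicity; any completion must use structure beyond what your proposal names (e.g.\ the sufficient-decrease term together with the direction conditions imposed in the main theorem, which force $\sum_k\alpha_k\vert g_k^Td_k\vert$ itself to behave).

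The paper takes an entirely different route: induction on the window length $M$. The base case $M=0$ is a telescoping series, $\sum_k[f(x_k)-f(x_{k+1})]=f(x_0)-\lim_k f(x_k)$, with compactness guaranteeing the limit exists; for the inductive step the index set is split according to whether the window maximum is attained at the oldest entry $f(x_{k-M^*})$, the complementary indices are absorbed into the series for window length $M^*-1$, and the remaining part is rewritten as a telescoping subseries plus a reindexed copy of the shorter-window series. Your running-maximum framework is a genuinely different decomposition --- and its first half is arguably tidier than the paper's --- but it stalls exactly where the paper's induction does the work. Be aware also that the paper's induction is itself delicate (it applies the induction hypothesis for a shorter window to the same sequence, and extracts subseries of series whose terms are not all nonnegative), so citing Grippo--Lampariello--Lucidi in place of an explicit charging construction does not close your gap; you would need either to import the induction wholesale for the term $\sum_k G_k$, or to strengthen the hypotheses so as to exclude the oscillation described above.
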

\begin{proof} We show this converges by induction on $M$. First consider the case $M=0$. Then we have a telescoping series,
\begin{align*}
 \sum_{k=0}^\infty\left[ f(x_{k}) - f(x_{k+1})\right] &= f(x_0) - \lim_{k \to \infty} f(x_k) \\
 &\leq f(x_0) - C
\end{align*}
where $C$ is the lower bound on $f(x)$; compactness of $\Omega$ is needed to ensure that the limit exists. 

Now, suppose the series converges for all $M$ less than $M^*\geq 1$. Let $\mathcal{K} \subseteq \mathbb{N}$ be the set of indices $k$ for which  $\displaystyle \max_{0 \leq j \leq M^*} f(x_{k-j}) = f(x_{k - M^*})$, and $\overline{\mathcal{K}}$ be its complement. Then we have

\begin{align*}
 \sum_{k=0}^\infty\left[  \max_{0 \leq j \leq M^*} f(x_{k-j}) - f(x_{k+1})\right] = \sum_{k \in \mathcal{K}} \biggl[ f(x_{k -M^*}) - f(x_{k+1})\biggl] +  \sum_{k\in\overline{\mathcal{K}}}\left[  \max_{0 \leq j \leq (M^*-1)} f(x_{k-j}) - f(x_{k+1})\right]
\end{align*}

The second summation is a subseries of  $\displaystyle \sum_{k=0}^{\infty}\left[  \max_{0 \leq j \leq (M^*-1)} f(x_{k-j}) - f(x_{k+1})\right]$, which converges by the induction hypothesis; therefore it must also converge. As for the first series, we have
\begin{align*}
\sum_{k \in \mathcal{K}} \biggl[ f(x_{k - M^*}) - f(x_{k+1}) \biggl] &= \sum_{k \in \mathcal{K}} \biggl[f(x_k) - f(x_{k+1}) + f(x_{k-M^*})-f(x_k) \biggl] \\
&= \sum_{k \in \mathcal{K}}\biggl[ f(x_k) - f(x_{k+1}) \biggl] +  \sum_{k \in \mathcal{K}}\biggl[ f(x_{k-M^*})-f(x_{k}) \biggl]
\end{align*}
The second summation can be reindexed by $k^* = k-1$, and letting $\mathcal{K}^* = \{k -1 \mid k \in \mathcal{K} \}$. We then have
\begin{align*}
\sum_{k \in \mathcal{K}} \biggl[ f(x_{k - M^*}) - f(x_{k+1}) \biggl] &=  \sum_{k \in \mathcal{K}}\biggl[ f(x_k) - f(x_{k+1}) \biggl] +  \sum_{k^* \in \mathcal{K^*}}\biggl[ f(x_{k^*-(M^*-1)})-f(x_{k^*+1}) \biggl] \\
&\leq \sum_{k \in \mathcal{K}} \biggl[f(x_k) - f(x_{k+1})\biggl] +  \sum_{k^* \in \mathcal{K}^*} \biggl[\max_{0 \leq j \leq (M^*-1)} f(x_{k^*-j})-f(x_{k^*+1})\biggl] 
\end{align*}
Both of these are subseries of convergent series (a telescoping series in the first case, and by the induction hypothesis in the second case), so the entire series must also converge.
\end{proof}

We can now prove the main result. This proof closely parallels that of~\cite{Griva} (Theorem 11.7) for the smooth case with Armijo conditions.

\begin{theorem} Suppose $f: \mathbb{R}^n \to \mathbb{R}$ is subdifferentiable and its subgradient is Lipschitz continuous with constant $L$. Additionally, $f(x) \geq C$ for all $x \in \Omega$, where $\Omega = \{x \mid f(x) \leq f(x_0) \}$ is compact. Finally, the sequence of search directions $d_k$ generated by the algorithm satisfy the following two conditions:
\begin{enumerate}
    \item $\mu \Vert g_k \Vert \leq \Vert d_k \Vert \leq \nu$  for some constants $\mu,\nu > 0$, where $g_k \in \partial f(x_k)$
    \item $\displaystyle \frac{d_k^T g_k}{\Vert d_k \Vert \Vert g_k \Vert} \leq -\epsilon$ for some $\epsilon > 0$
    \end{enumerate}
Then, the sequence generated by the nonmonotone line search satisfies
\begin{align*}
 \displaystyle \lim \inf_{k \to \infty} \Vert g_k \Vert = 0
\end{align*}
\end{theorem}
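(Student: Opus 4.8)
The plan is to derive directly that $\sum_{k=0}^\infty \Vert g_k \Vert^2 < \infty$, from which the claim $\liminf_{k\to\infty} \Vert g_k \Vert = 0$ follows at once. The natural starting point is the line search acceptance condition itself, which rearranges to
\[
\max_{0 \leq j \leq M} f(x_{k-j}) - f(x_{k+1}) \geq -\gamma \alpha_k g_k^T d_k - \eta_k.
\]
By the second direction condition, $-g_k^T d_k \geq \epsilon \Vert d_k \Vert \Vert g_k \Vert$, so the right-hand side is bounded below by $\gamma \epsilon \alpha_k \Vert d_k \Vert \Vert g_k \Vert - \eta_k$. The whole argument then reduces to producing a lower bound on $\alpha_k \Vert d_k \Vert \Vert g_k \Vert$ that is a fixed positive multiple of $\Vert g_k \Vert^2$, uniformly in $k$.

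To get this bound I would split on the two outcomes of Lemma~\ref{L:lemma3}. If $\alpha_k = 1$, the first direction condition $\Vert d_k \Vert \geq \mu \Vert g_k \Vert$ immediately gives $\alpha_k \Vert d_k \Vert \Vert g_k \Vert \geq \mu \Vert g_k \Vert^2$. Otherwise Lemma~\ref{L:lemma3} gives $\alpha_k \geq \frac{1-\gamma}{L}\frac{-g_k^T d_k}{\Vert d_k \Vert^2}$; substituting $-g_k^T d_k \geq \epsilon \Vert d_k \Vert \Vert g_k \Vert$ yields $\alpha_k \geq \frac{(1-\gamma)\epsilon}{L}\frac{\Vert g_k \Vert}{\Vert d_k \Vert}$, so that the $\Vert d_k \Vert$ factors cancel and $\alpha_k \Vert d_k \Vert \Vert g_k \Vert \geq \frac{(1-\gamma)\epsilon}{L}\Vert g_k \Vert^2$. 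Setting $c = \gamma \epsilon \min\{\mu, (1-\gamma)\epsilon / L\} > 0$, both cases together give
\[
\max_{0 \leq j \leq M} f(x_{k-j}) - f(x_{k+1}) \geq c \Vert g_k \Vert^2 - \eta_k .
\]

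Finally, I would sum this inequality over $k$ and invoke Lemma~\ref{L:lemma4}, whose left-hand side is a finite number, together with the summability $\sum_k \eta_k < \infty$. This gives $c \sum_{k=0}^\infty \Vert g_k \Vert^2 \leq \sum_{k=0}^\infty \bigl[\max_{0 \leq j \leq M} f(x_{k-j}) - f(x_{k+1})\bigr] + \sum_{k} \eta_k < \infty$, and since $c > 0$ we conclude $\sum_{k=0}^\infty \Vert g_k \Vert^2 < \infty$. Hence $\Vert g_k \Vert \to 0$ (in fact the full limit, which is stronger than required), so in particular $\liminf_{k\to\infty} \Vert g_k \Vert = 0$. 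I expect the main obstacle to be the uniform lower bound on $\alpha_k \Vert d_k \Vert \Vert g_k \Vert$: one must correctly merge the two alternatives of Lemma~\ref{L:lemma3} and deploy \emph{both} direction conditions so that the $\Vert d_k \Vert$ factors cancel and the resulting constant $c$ is independent of $k$. Once that bound is in hand, the conclusion is just a matter of assembling the previously established lemmas.
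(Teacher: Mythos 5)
Your proof is correct, and it takes a genuinely different route from the paper's. The paper first sums the line-search inequality to obtain $\sum_k -\gamma\alpha_k g_k^T d_k < \infty$ via Lemma~\ref{L:lemma4}, deduces $\sum_k \alpha_k \Vert g_k \Vert^2 < \infty$ and hence $\alpha_k \Vert g_k \Vert^2 \to 0$, and only then brings in Lemma~\ref{L:lemma3}, inside a contradiction argument: if $\liminf_{k\to\infty} \Vert g_k \Vert > 0$ then $\alpha_k \to 0$, which is impossible because $\alpha_k \geq \min\{1, \Gamma \Vert g_k \Vert^2\}$ with $\Gamma = (1-\gamma)\epsilon\mu/(\nu^2 L)$. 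You instead deploy Lemma~\ref{L:lemma3} termwise \emph{before} summing, and the cancellation you exploit --- $\alpha_k \geq \frac{(1-\gamma)\epsilon}{L}\frac{\Vert g_k \Vert}{\Vert d_k \Vert}$, so $\alpha_k \Vert d_k \Vert \Vert g_k \Vert \geq \frac{(1-\gamma)\epsilon}{L}\Vert g_k \Vert^2$ --- is sound. This buys three things the paper's version does not have: a stronger conclusion ($\sum_k \Vert g_k \Vert^2 < \infty$, hence $\Vert g_k \Vert \to 0$, not merely $\liminf = 0$); a direct rather than contradiction argument; and no use whatsoever of the upper bound $\Vert d_k \Vert \leq \nu$ --- only the lower bound $\mu \Vert g_k \Vert \leq \Vert d_k \Vert$ enters, and only in the $\alpha_k = 1$ branch, whereas the paper needs $\nu$ to convert Lemma~\ref{L:lemma3} into the bound $\Gamma \Vert g_k \Vert^2$. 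That last point is a meaningful improvement, since Section~\ref{S:convergence} singles out $\Vert d_k \Vert \leq \nu$ as precisely the condition that is difficult to verify for the SCS directions. The one step to phrase carefully in a final write-up: the summands $\max_{0 \leq j \leq M} f(x_{k-j}) - f(x_{k+1})$ need not be nonnegative (they can be as small as $-\eta_k$), so the concluding inequality should be run through partial sums, $c\sum_{k=0}^N \Vert g_k \Vert^2 \leq \sum_{k=0}^N \bigl[\max_{0 \leq j \leq M} f(x_{k-j}) - f(x_{k+1})\bigr] + \sum_{k=0}^N \eta_k$, where the right side has bounded partial sums by Lemma~\ref{L:lemma4} and the summability of $\eta_k$; since the left-hand terms are nonnegative, the series converges. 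That is exactly the argument you sketch, so this is bookkeeping rather than a gap.
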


\begin{proof} By the nonmonotone line search condition,
\begin{align*}
\sum_{k=0}^\infty -\gamma \alpha_k g_k^T d_k &\leq \sum_{k=0}^\infty\left[  \max_{0 \leq j \leq M} f(x_{k-j}) - f(x_{k+1})\right] + \sum_{k=0}^\infty \eta_k \\
\sum_{k=0}^\infty -\gamma \alpha_k g_k^T d_k&< \infty~~\text{(By Lemma~\ref{L:lemma4} and definition of $\eta_k$)}\\
\sum_{k=0}^\infty \gamma \epsilon \alpha_k \Vert g_k \Vert \Vert d_k \Vert &< \infty~~\text{(by second condition on $d_k$)}\\
\gamma \epsilon \mu \sum_{k=0}^\infty \alpha_k \Vert g_k \Vert^2 &< \infty~~\text{(by first condition on $d_k$)}.
\end{align*}
Convergence of the series therefore implies that $\displaystyle \lim_{k \to \infty} \alpha_k \Vert g_k \Vert^2 = 0$. Suppose now that $\displaystyle \lim \inf_{k \to \infty} \Vert g_k \Vert > 0$. Then it must be the case that $\displaystyle \lim_{k \to \infty} \alpha_k = 0$. By Lemma~\ref{L:lemma3}, we have either $\alpha_k = 1$ or $\displaystyle \alpha_k \geq \frac{\gamma-1}{L} \frac{g_k^T d_k}{\Vert d_k \Vert^2}$; this second bound simplifies to  $\displaystyle \alpha_k \geq \frac{(1-\gamma) \epsilon \mu}{\nu^2 L}\Vert g_k \Vert^2$ under the provided assumptions on $d_k$. Letting $\displaystyle \Gamma = \frac{(1-\gamma) \epsilon \mu}{\nu^2 L}$, we therefore have $\alpha_k \geq \min \{1, \Gamma \Vert g_k \Vert^2 \}$, with $\Gamma > 0$. Therefore $\alpha_k \to 0$ if and only if $\displaystyle \lim \inf_{k \to \infty} \Vert g_k \Vert = 0$, a contradiction. We conclude that $ \displaystyle \lim \inf_{k \to \infty} \Vert g_k \Vert = 0$.
\end{proof}

\end{document}